\documentclass{amsproc}
\usepackage{tikz-cd} 
\usepackage{float}
\usepackage{amsmath}
\usepackage{amsthm}
\usepackage{amsfonts}
\usepackage{mathtools}

\usepackage{lipsum}

\let\OLDthebibliography\thebibliography
\renewcommand\thebibliography[1]{
  \OLDthebibliography{#1}
  \setlength{\parskip}{0pt}
  \setlength{\itemsep}{0pt plus 0.3ex}
}

\newtheorem{thm}{Theorem}[section]
\newtheorem{defn}[thm]{Definition}
\newtheorem{lem}[thm]{Lemma}
\newtheorem{prop}[thm]{Proposition}
\newtheorem{cor}[thm]{Corollary}
\newtheorem{exmp}[thm]{Example}
\newtheorem{rem}[thm]{Remark}

\definecolor{cadmiumgreen}{rgb}{0.0, 0.42, 0.24}
\definecolor{darkpastelgreen}{rgb}{0.01, 0.75, 0.24}

\numberwithin{equation}{section}

\newcommand{\map}[3]{{#1}:{#2}\longrightarrow{#3}}
\renewcommand{\a}{\alpha}
\renewcommand{\b}{\beta}

\newcommand{\len}[1]{\ell(#1)}
\newcommand{\N}{\mathbb{N}}
\newcommand{\pt}{\partial}

\newcommand{\Il}{\mathsf{L}}
\newcommand{\Ir}{\mathsf{R}}
\newcommand{\pow}[2]{{#1}^{#2}}

\newcommand{\emptypartition}{\varepsilon}

\newcommand{\Pset}{\mathcal{P}}
\newcommand{\D}{\mathcal{D}}
\newcommand{\df}{\partial}

\newcommand{\Nil}{\mathcal{N}}
\newcommand{\Fset}{\mathcal{F}}
\newcommand{\AP}{\mathcal{A}}
\newcommand{\BP}{\mathcal{B}}
\newcommand{\two}[1]{\mu_2(#1)}
\newcommand{\Wset}{\mathcal{W}}
\newcommand{\w}{\omega}
\newcommand{\supp}{\mathcal{S}}

\newcommand{\comm}{\sim_{\mathcal{C}}}
\newcommand{\ncomm}{\not\sim_{\mathcal{C}}}
\newcommand{\Orb}{\mathcal{O}}

\DeclareMathOperator{\im}{Im}
\DeclareMathOperator{\IS}{IS}

\newcommand{\RRset}{\mathcal{Q}}
\newcommand{\emptyf}{\emptypartition}
\newcommand{\burge}[1]{\Omega(#1)}
\newcommand{\pti}[2]{{#1}^{(#2)}}
\newcommand{\Des}[1]{\mathrm{Des}(#1)}
\newcommand{\des}[1]{\mathrm{des}(#1)}
\newcommand{\maj}[1]{\mathrm{maj}(#1)}

\usetikzlibrary{tikzmark}
\newcounter{pairctr}

\usepackage{amssymb}
\usepackage{graphicx}

\title{Jordan types for pairs of commuting nilpotent matrices: \\ A survey}

\author[T. Ko\v{s}ir]{Toma\v{z} Ko\v{s}ir}
\address{University of Ljubljana, Faculty of Mathematics and Physics, Ljubljana, Slovenia, and Institute of Mathematics, Physics and Mechanics, Ljubljana, Slovenia
}
\email{tomaz.kosir@fmf.uni-lj.si}

\thanks{The author acknowledges financial support from the Slovenian Research and Innovation Agency (research core funding No. P1-0448 until December 31, 2025, and research core funding P1-0222 and research project No. J1-50002 after January 1, 2026). }
\subjclass{15A27, 05A17, 15A21, 13E10, 14A10}
\keywords{commuting matrices, partitions, Jordan types, dominant partition, decent map on partitions, the Box Theorem.}%

\date{\today}

\begin{document}

\begin{abstract}
  The aim of the paper is to survey results on Jordan types for pairs of nilpotent commuting matrices. We also review recent proof of the Box Conjecture on Jordan types that have equal dense orbit in the nilpotent commutator. 
\end{abstract}

\maketitle

\begin{center}
    \textit{Dedicated to Tony Iarrobino for his 80th birthday.}  
\end{center}

\section{Introduction}

We survey results on possible pairs of integer partitions that occur as Jordan types of two commuting nilpotent matrices. We call such pairs \emph{commuting Jordan types} and the corresponding partitions \emph{commuting partitions}. Precise description of all possible commuting Jordan types appears to be a difficult task. The fact that it is field dependent was proved for finite fields by Britnell and Wildon \cite[Prop. 4.12]{BW}. We assume that the underlying field is an infinite field. The survey is a complement to Khatami's survey \cite{Kha-S}. 

The nilpotent commutator of a nilpotent matrix of Jordan type $P$ is an irreducible variety and therefore one of the nilpotent orbits has a dense intersection with it. We denote the Jordan type of the dense orbit by $\D(P)$ following Panyushev \cite{Pan}, who introduced map $\D$. Recently, the Box Conjecture formulated by Iarrobino, Khatami, Van Steirtenghem, and Zhao \cite{IKvSZ} on Jordan types in the inverse image $\D^{-1}(P) $ was proved in \cite{IKM}. The central part of the survey is dedicated to an outline of the proof. But first, we state some positive and some negative results on commuting Jordan types that are scattered in the literature. They were proved using tools of linear algebra and combinatorics. Section \ref{Dominant} is dedicated to  the dense orbit map $\D$ and its properties. We include also an observation that seems to be new:  Two non-equal Rogers-Ramanujan partitions are not commuting. A partition is \emph{super-distinct} or \emph{Rogers-Ramanujan} if its parts differ by at least $2$. We conclude the survey with a short list of open questions. 

Early references on commuting matrices can be traced to the end of the 19th century and the beginning of 20th century, e.g., \cite{Frob-1,Frob-2,Plem,Schur,Sylv,Tab-1,Tab-2}. For a more detailed discussion of the early developments we refer to \cite[pp. 93-94]{MacDu}. The study of varieties of commuting matrices can be traced to Motzkin and Taussky \cite{MoTa} and Gerstenhaber \cite{Ger}. See Guralnick \cite{Gur} for a discussion of the irreducibility of varieties of commuting pairs of matrices. Some interesting results on algebras of commuting matrices are found in \cite{SuTy}.

I met the problem of commuting Jordan types while working on my PhD thesis \cite{Kos}. First results on the nilpotent commutator were proved by Basili \cite{Bas-1,Bas-2} and Baranovsky \cite{Bar}. Later, Oblak studied the problem in her PhD thesis \cite{Obl-T} and published her results in \cite{Obl-1,Obl-2,Obl-3}. Independently, results related to the problem of commuting Jordan types were published by Panyushev \cite{Pan}, Premet \cite{Pre}, and Basili, Iarrobino, and Khatami \cite{BI,BIK,IK,Kha-1,Kha-2}. Recently, the topic of commuting Jordan types in artinian algebras has attracted substantial interest (see \cite{AIM-survey} for a survey).

The focus of Khatami's survey \cite{Kha-S} is on the results for the dense orbit map $\D$ before the proof of the Box Conjecture. Here we survey also other results on commuting pairs scattered in the literature and provide a review of the proof of the Box Conjecture. 

The Box Conjecture was communicated to me by Tony Iarrobino most likely by 2012. It represents a natural formulation of the structures that one gained understanding of through work of Basili, Iarrobino, Khatami, Oblak, Panyushev, and others. During her work on the PhD thesis \cite{Obl-T}, Oblak noticed that the inverse image $\D^{-1}(Q)$ for $Q=(p,q)$ has $(p-q-1)q$ elements (see \cite[Prop. 4.4]{Obl-3} for publication of the result). Honors student Zhao, who worked with Iarrobino in 2011/12 on a student research project, proposed to arrange partitions in $\D^{-1}(p,q)$ in a rectangular array. The authors of \cite{IKvSZ} developed these ideas further and published them as the Table Theorem \cite[Thms. 3.12 \& 3.19]{IKvSZ} and as the Box Conjecture \cite[Conj. 4.11]{IKvSZ}. The first version of the paper \cite{IKvSZ} was put on arXiv repository in 2014, four years prior to the publication of the paper.

The Box Conjecture stands out as the central part of our current understanding of commuting Jordan types for pairs of nilpotent matrices. The Burge correspondence gives a natural connection with the semigroup theory as described later in the paper. The examples found recently in the joint work with Khatami \cite{Kha-Kos} indicate that the Burge correspondence is likely not sufficient for further progress on the problems of commuting Jordan types. Definitely, it is a useful tool to obtain new results. It was one of the tools used to prove the main theorem of \cite[Thm. 1.3]{BoIK2025} and it was used to find examples of pairs of Jordan types in $\D^{-1}(p,q)$ that do not commute in \cite{Kha-Kos}.

\section{Preliminaries}

We denote by $\Pset$ the set of all partitions of natural numbers and by $\Pset(n)$ the set of all partitions of natural number $n$. A partition $P\in\Pset$ is written as $P=(p_1,p_2,\ldots,p_k)$, where $p_i$ are arranged in nonincreasing order and  $p_k> 0$. We adjoin an arbitrary finite number of $0$ to $P$ on the right if needed. We also use the 'power' notation $P=(p_1^{m_1},p_2^{m_2},\ldots,p_r^{m_r})$, where $p_i$ are strictly decreasing, $p_r>0$, and all multiplicities $m_i$ are  positive integers.

We assume that $F$ is an infinite field. We denote by $F[x]$ the algebra of all polynomials with coefficients in $F$ and by $M_n(F)$ the algebra of all $n\times n$ matrices over $F$.

Suppose $B\in M_n(F)$ is a nilpotent matrix and $P=P_B=(p_1,p_2,\ldots,p_k)\in\Pset$ is its Jordan type, i.e., $p_1\ge p_2\ge\ldots\ge p_k>0$ are sizes of its Jordan blocks. The nilpotent commutator $\Nil_B$ of $B$ is defined by 
$$\Nil_B=\{A\in M_n(F):\,BA=AB,A\ \text{nilpotent}\}.$$ 

We are interested in pairs of partitions $(P,Q)\in\Pset^2$ that are Jordan types of two commuting nilpotent matrices. We use notation $P\comm Q$ for such pairs of partitions, i.e., we write $P\comm Q$ if there exist two commuting nilpotent matrices $A$ and $B$ such that $P_A=P$ and $P_B=Q$. We say that $P$ and $Q$ are \emph{commuting partitions} if $P\comm Q$.  Relation $\comm$ is symmetric and reflexive, but it is not transitive.

In connection with commuting Jordan types two classes of partitions play important role. A partition $Q=(q_1,q_2,\ldots,q_k)$ is called \emph{super-distinct or Rogers-Ramanujan} (RR for short) if its parts differ by at least $2$: $q_i-q_{i+1}\ge 2$ for $i=1,2,\ldots,k-1$. We denote the set of all RR partitions by $\RRset$.

On the opposite side a partition $P=(p_1,p_2,\ldots,p_k)$ is called \emph{almost rectangular} (AR for short) if its parts differ by at most $1$: $p_1-p_{k}\le 1$. 

\vspace{0.2cm}

Below are Ferrers diagrams of an RR partition (left) and an AR partition (right):

\vspace{0.2cm}

\begin{minipage}[t]{0.4\textwidth}
 Partition $(8,5,3,1)$\\
    \begin{tikzpicture}[scale=0.4] 
        \def\squaresize{0.8}
        \foreach \x in {0,...,7} {
            \pgfmathsetmacro{\currentx}{\x * \squaresize} 
            \draw (\currentx,0) rectangle ++(\squaresize,\squaresize); 
        }
        \foreach \x in {0,...,4} {
            \pgfmathsetmacro{\currentx}{\x * \squaresize} 
            \pgfmathsetmacro{\currenty}{-\squaresize} 
            \draw (\currentx,\currenty) rectangle ++(\squaresize,\squaresize); 
        }
        \foreach \x in {0,...,2} {
            \pgfmathsetmacro{\currentx}{\x * \squaresize} 
            \pgfmathsetmacro{\currenty}{-2 * \squaresize} 
            \draw (\currentx,\currenty) rectangle ++(\squaresize,\squaresize); 
        }
        \pgfmathsetmacro{\currenty}{-3 * \squaresize} 
        \draw (0,\currenty) rectangle ++(\squaresize,\squaresize); 
    \end{tikzpicture}
\end{minipage}
\begin{minipage}[t]{0.4\textwidth}
Partition $(6,6,6,5,5)=[28]^5$\\
    \begin{tikzpicture}[scale=0.4] 
        \def\squaresize{0.8}
        \foreach \x in {0,...,5} {
            \pgfmathsetmacro{\currentx}{\x * \squaresize} 
            \draw (\currentx,0) rectangle ++(\squaresize,\squaresize); 
        }
        \foreach \x in {0,...,5} {
            \pgfmathsetmacro{\currentx}{\x * \squaresize} 
            \pgfmathsetmacro{\currenty}{-\squaresize} 
            \draw (\currentx,\currenty) rectangle ++(\squaresize,\squaresize); 
        }
        \foreach \x in {0,...,5} {
            \pgfmathsetmacro{\currentx}{\x * \squaresize} 
            \pgfmathsetmacro{\currenty}{-2 * \squaresize} 
            \draw (\currentx,\currenty) rectangle ++(\squaresize,\squaresize); 
        }
        \foreach \x in {0,...,4} {
            \pgfmathsetmacro{\currentx}{\x * \squaresize} 
            \pgfmathsetmacro{\currenty}{-3 * \squaresize} 
            \draw (\currentx,\currenty) rectangle ++(\squaresize,\squaresize); 
        }
        \foreach \x in {0,...,4} {
            \pgfmathsetmacro{\currentx}{\x * \squaresize} 
            \pgfmathsetmacro{\currenty}{-4 * \squaresize} 
            \draw (\currentx,\currenty) rectangle ++(\squaresize,\squaresize); 
        }
    \end{tikzpicture}
\end{minipage}

\vspace{0.2cm}  

\section{Some miscellaneous results}\label{Misc}

In this section we present a collection of known results on Jordan types for commuting nilpotent matrices. Their proofs  involve  tools of linear algebra or combinatorics.

\subsection{One Jordan block}

We begin with the simplest case when $B$ is nonderogatory, i.e., $\dim\ker B=1$. Then $B$ is similar to a nilpotent Jordan block $J_n$ and its Jordan type is the one part partition $(n)$. The nilpotent commutator $\Nil_B$ is equal to the nil-algebra 
$$F_0[B]=\{p(B);\, p\in F[x],\, p(0)=0 \} $$ 
(see \cite[Thm. 9.1.4]{GLR}), i.e, $F_0(B)$ is the algebra of all polynomial functions in $B$ with the zero constant coefficient. Jordan types of $A\in\Nil_B$ are equal to Jordan types of powers $B^k$, $k=1,2,\ldots,n$. The corresponding partitions are exactly all almost rectangular partitions of $n$. Then, we write $P_{B^k} =[n]^k$. We summarize this case in the following statement.

\begin{prop}
    A partition $P\in\Pset(n)$ is commuting with partition $(n)$ if and only if $P=[n]^k $ for some $k$. So, partitions commuting with $(n)$ are exactly all almost rectangular partitions of $n$. There are $n$ of them.
\end{prop}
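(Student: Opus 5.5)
We reduce everything to the nilpotent Jordan block $B=J_n$, since $(n)$ is the Jordan type of exactly the nilpotent matrices similar to $J_n$. By the cited fact \cite[Thm. 9.1.4]{GLR}, the nilpotent commutator is $\Nil_B=F_0[B]$. So a partition $P$ commutes with $(n)$ if and only if $P$ is the Jordan type of some $A=p(B)$ with $p(0)=0$.

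The first step is to write such an $A$ as $A=c_kB^k+c_{k+1}B^{k+1}+\cdots=B^k u(B)$, where $c_k\neq 0$ and $1\le k\le n$. The case $A=0$ corresponds to $k=n$, since $B^n=0$. Here $u(B)=c_k I+c_{k+1}B+\cdots$ is invertible and commutes with $B$. Hence $\ker A^j=\ker B^{kj}$ for every $j$, so $A$ and $B^k$ have the same kernel dimensions. Because a nilpotent Jordan type is determined by the sequence $\dim\ker A^j$, $A$ has the same Jordan type as $B^k$.

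The second step is to compute the Jordan type of $B^k$. Let $e_1,\dots,e_n$ be the standard chain with $Be_i=e_{i-1}$ and $e_0=0$. Then $B^k$ maps $e_i\mapsto e_{i-k}$. The basis therefore splits into the $k$ chains $\{e_i: i\equiv r \pmod k\}$ for $r=1,\dots,k$, and each chain is a single Jordan block of $B^k$. These chains have lengths $\lceil (n-r+1)/k\rceil$, which all lie in $\{\lfloor n/k\rfloor,\lceil n/k\rceil\}$. So $P_{B^k}$ is the almost rectangular partition of $n$ with exactly $k$ parts, namely $[n]^k$. This case also covers $k=n$, giving $(1^n)$.

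The third step is to count. An almost rectangular partition of $n$ with $k$ parts is uniquely determined: write $n=qk+r$ with $0\le r<k$; the partition must have $r$ parts equal to $q+1$ and $k-r$ parts equal to $q$. Conversely, every almost rectangular partition arises as some $[n]^k$. Since $k$ ranges over $1,\dots,n$ and distinct $k$ give distinct numbers of parts, there are exactly $n$ such partitions.

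No step is a serious obstacle once $\Nil_B=F_0[B]$ is taken from \cite{GLR}. The one point that needs care is the reduction from a general polynomial $p(B)$ to the pure power $B^k$, which is handled by factoring out the unit $u(B)$.
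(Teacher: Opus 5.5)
Your proof is correct and follows the paper's argument. You reduce to $B=J_n$, use $\Nil_B=F_0[B]$, note that every $A\in\Nil_B$ has the Jordan type of some power $B^k$, and identify $P_{B^k}=[n]^k$. You also supply the details the paper leaves implicit: the unit factorization $A=B^k u(B)$ giving $\dim\ker A^j=\dim\ker B^{kj}$, and the residue-class chain computation showing that $B^k$ has exactly $k$ blocks, each of size $\lfloor n/k\rfloor$ or $\lceil n/k\rceil$.
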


This result implies that in general, with increasing $n$, most partitions do not commute with the one part partition $(n)$, since the number of partitions grows as an exponential function of the square root of $n$ \cite[p. 69]{Andrews}. The smallest pair of non-commuting partitions occurs for $n=4$. Then partitions $(3,1)$ and $(4)$ are not commuting. 


\subsection{Universally commuting partitions}
\begin{defn}
A partition $P\in\Pset(n)$ is called \emph{universally commuting} if $P\comm Q$ for any partition $Q\in\Pset(n)$. 
\end{defn}

Oblak \cite[Thm. 2.4]{Obl-3} characterized all universally commuting partitions. The same result was proved independently by Britnell and Wildon \cite[Thm. 4.11]{BW}.

\begin{thm}\label{univ_commut}
For $n\leq 3$ all partitions are universally commuting, while for $n\geq 4$ only partitions of the form $(2^k,1^l)$ are universally commuting.
\end{thm}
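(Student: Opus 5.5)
The proof splits into a positive part (partitions $(2^k,1^l)$ commute with everything, and every partition of $n\le 3$ is universally commuting) and a negative part (for $n\ge 4$ no other partition is universally commuting).

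For the positive part, I fix an arbitrary $Q\in\Pset(n)$ and a nilpotent $B$ of Jordan type $Q$. I then construct a nilpotent $A$ commuting with $B$ whose Jordan type is $(2^k,1^l)$, where $2k+l=n$.

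The natural candidates are built blockwise from powers of $B$. On a Jordan block $J_q$ of $B$, the powers $J_q^{j}$ have type $[q]^j$. In particular, $J_q^{\lceil q/2\rceil}$ has type $(2^{\lfloor q/2\rfloor},1^{q \bmod 2})$. So taking $A=\bigoplus_i J_{q_i}^{\lceil q_i/2\rceil}$ yields a type $(2^{m},1^{n-2m})$, where $m=\sum\lfloor q_i/2\rfloor$ is the maximum number of $2$'s obtainable this way.

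To reach any smaller $k$, I replace the power on some blocks by a higher power, or by zero. On a single block the achievable types of the form $(2^j,1^{q-2j})$ are exactly those of $J_q^{q-j}$, for $j=0,\dots,\lfloor q/2\rfloor$. Summing over blocks therefore realises every $k$ with $0\le k\le m$.

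For $k>m$ the blocks must be coupled. The plan is to pair two odd blocks $J_p,J_q$ with $p\ge q$, using the standard Toeplitz-type maps between blocks that commute with $B$. The aim is to produce a square-zero map with one more $2$ than the blockwise choice: it should connect the middle vector of one odd block to the other. Any square-zero nilpotent commuting with $B$ has type $(2^r,1^{n-2r})$, where $r$ is its rank, so only ranks matter. The goal thus reduces to showing that the ranks of square-zero elements of $\Nil_B$ fill an interval $[0,\lfloor n/2\rfloor]$.

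I would argue this with an explicit construction. Pair up the blocks of $B$ two at a time and use the commuting maps $J_p\oplus J_q$ that send the top half of the basis chains to the bottom half across both blocks. This is the main technical step, and I expect it to be the main obstacle: the ranks must be tracked carefully when $p-q$ is odd. For $n\le 3$, the finitely many cases can be checked directly, and all of them follow from the construction above anyway.

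For the negative part with $n\ge 4$, suppose $P$ has a part $p_1\ge 3$. The claim is that $P\not\comm (n)$. This follows from the Proposition on one Jordan block: $P$ would have to be almost rectangular, $[n]^k$. Almost rectangular partitions with largest part $\ge3$ do commute with $(n)$, so this alone is not enough, and a second test partition is needed.

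If $P=[n]^k$ is almost rectangular with $p_1\ge 3$ and $P\ne(n)$, I would test against a partition such as $(n-1,1)$ and use the structure of $\Nil_B$ for $B$ of type $(n-1,1)$. Its nilpotent commutator consists of block matrices whose diagonal parts are polynomials in $J_{n-1}$ with zero constant term, plus rank-one corner terms. From this one can compute the achievable types, presumably $[n-1]^j$ combined with small modifications, and show that an almost rectangular $P$ with $p_1\ge3$ and at least two parts is not among them.

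The case $P=(n)$ with $n\ge4$ is immediate, since $(n)\not\comm(n-2,1,1)$, which is not almost rectangular. In fact, simply taking $Q=(n)$ excludes every non-AR $P$ with $p_1\ge 3$, and the $(n-1,1)$ computation handles the AR ones.
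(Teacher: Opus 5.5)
Your reduction is right: all $(2^k,1^l)\comm Q$ holds exactly when, for $B$ of type $Q$, the square-zero elements of $\Nil_B$ realize every rank in $[0,\lfloor n/2\rfloor]$. For $n\ge4$, the test partitions $(n)$ and $(n-1,1)$ do suffice for the converse. But both load-bearing steps are left open, and your concrete description of the first is misdirected.

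\textbf{Positive direction.} Coupling is needed exactly when two blocks $J_p,J_q$ are \emph{both odd}, so $p-q$ is even; when $p-q$ is odd, the blockwise choice already gives $\lfloor (p+q)/2\rfloor$. More seriously, no map that sends basis vectors of the chains to basis vectors can work. Take $B=J_3\oplus J_1$ with chain $e_3\mapsto e_2\mapsto e_1\mapsto 0$ and $f\mapsto 0$. Every $A\in\Nil_B$ has the form $e_3\mapsto ae_2+be_1+df$, $e_2\mapsto ae_1$, $e_1\mapsto 0$, $f\mapsto ce_1$. Then $A^2=0$ with rank $2$ forces $a\ne0$ and $a^2+cd=0$. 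So $A$ needs a nonzero diagonal part \emph{and} cross terms in both directions whose contributions to $A^2$ cancel, and $\im A=\operatorname{span}(ae_2+df,e_1)$ is not a coordinate subspace.

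Here is the missing construction. Write the two blocks as $F[x]/(x^p)u\oplus F[x]/(x^q)v$ with $B$ acting as $x$, $p\ge q$ both odd, and $h=(p-q)/2$. Set $Au=x^hu+v$ and $Av=-x^{2h}u-x^hv$. This is well defined, since $x^pAu=0=x^qAv$, and it commutes with $B$. Its $2\times2$ matrix over $F[x]$ has trace and determinant $0$, so $A^2=0$. Since $Av=-x^hAu$, the image $\im A=F[x](x^hu+v)$ has dimension $\max(p-h,q)=(p+q)/2$. Hence each pair of odd blocks realizes every rank in $[0,(p+q)/2]$. Each remaining block $J_q$ realizes $[0,\lfloor q/2\rfloor]$ via $J_q^{q-j}$, and direct sums give all of $[0,\lfloor n/2\rfloor]$.

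\textbf{Negative direction.} The computation you call presumable is the whole content and must be carried out; it is short. Let $u$ generate $J_{n-1}$ and $v$ span $J_1$. Each $A\in\Nil_B$ has $Au=\alpha(x)u+\beta v$ and $Av=\gamma x^{n-2}u$ with $\alpha(0)=0$. Hence $A(x^iu)=x^i\alpha u$ for $i\ge1$, $A^2u=(\alpha^2+\beta\gamma x^{n-2})u$, and $A^2v=0$. Let $k$ be the order of $\alpha$.

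Suppose $P=[n]^\ell$ with $\ell\ge2$ and $p_1\ge3$. Then all parts are $\ge2$, so $n\ge2\ell+1$, $\operatorname{rank}A=n-\ell\ge3$, and $\operatorname{rank}A^2=n-2\ell$. Rank at least $3$ forces $k\le n-3$, since otherwise $\im A\subseteq\operatorname{span}(x^{n-2}u,v)$. Then $\operatorname{rank}A=n-1-k$, so $k=\ell-1$ and $2k\le n-3$. Thus $\alpha^2+\beta\gamma x^{n-2}$ has order exactly $2k$, so $\im A^2=x^{2k}F[x]u$ and $\operatorname{rank}A^2=n-2\ell+1$, a contradiction. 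Together with your test $Q=(n)$ for non-AR $P$ and $(n)\ncomm(n-1,1)$, this completes the negative direction.
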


\subsection{Conjugate partitions are commuting}
Given a partition $P=(p_1$, $p_2$, $\ldots,p_k)$, then the part $p_i'$ of its \emph{conjugate partition} $P^T=(p_1',p_2',\ldots,p_{p_1}')$ is equal to the number of parts of $P$ that are greater or equal to $i$. Baranovsky proved in the proof of \cite[Lem. 3]{Bar} the following fact. 

\begin{prop}
For each partition $P$ we have $P\comm P^T$, where $P^T$ is the conjugate partition of $P$. 
\end{prop}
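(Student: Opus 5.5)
Let $B$ be nilpotent of Jordan type $P=(p_1,\dots,p_k)$. The plan is to write down an explicit $A$ commuting with $B$ whose Jordan type is $P^T$. The natural model is a Ferrers diagram viewed as a bigraded module. Take the vector space $V$ with basis $\{e_{i,j}: 1\le i\le k,\ 1\le j\le p_i\}$, indexed by the boxes of the Ferrers diagram of $P$ (row $i$, column $j$). Define
\[
Be_{i,j}=e_{i,j+1},\qquad Ae_{i,j}=e_{i+1,j},
\]
with the convention that $e_{i,j}=0$ whenever the box $(i,j)$ is not in the diagram.

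The first step is to compute the Jordan types. For $B$, each row $i$ gives a chain $e_{i,1}\mapsto e_{i,2}\mapsto\cdots\mapsto e_{i,p_i}\mapsto 0$. So $B$ is a direct sum of Jordan blocks of sizes $p_1,\dots,p_k$, and $P_B=P$. For $A$, each column $j$ gives a chain $e_{1,j}\mapsto e_{2,j}\mapsto\cdots$. This chain has length equal to the number of rows $i$ with $p_i\ge j$, which is exactly $p_j'$, because the boxes of column $j$ are rows $1,\dots,p_j'$. Hence $P_A=P^T$.

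The second step, and the only place needing care, is commutativity, which I expect to be the main obstacle. We have $ABe_{i,j}=e_{i+1,j+1}$ and $BAe_{i,j}=e_{i+1,j+1}$ as formal symbols. The subtlety is the truncation convention. For $AB$, the intermediate box $(i,j+1)$ must lie in the diagram. For $BA$, the intermediate box $(i+1,j)$ must lie in the diagram. Both requirements are harmless. If $(i+1,j+1)$ is in the diagram, then both $(i,j+1)$ and $(i+1,j)$ are in it, since a Ferrers diagram is an order ideal: $p_i\ge p_{i+1}\ge j+1$. If $(i+1,j+1)$ is not in the diagram, both products vanish. So the two sides agree in every case, and $AB=BA$.

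Conjugating by the change of basis that puts $B$ in Jordan form, we obtain commuting nilpotent matrices with Jordan types $P$ and $P^T$. Hence $P\comm P^T$. Equivalently, this is the module $F[x,y]/I_P$ for the monomial ideal $I_P$ associated with the diagram, with $B$ and $A$ acting as multiplication by $x$ and $y$.
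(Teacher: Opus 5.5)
Your proof is correct. The operators on the Ferrers diagram, equivalently multiplication by $x$ and $y$ on $F[x,y]/I_P$ for the monomial ideal of the diagram, have Jordan types $P$ and $P^T$. Your check that the truncations agree, using that the diagram is an order ideal ($j+1\le p_{i+1}\le p_i$), is exactly what commutativity requires. The paper gives no proof of its own; it cites Baranovsky, whose argument in the proof of his Lemma 3 is essentially this monomial-ideal construction, so your route is the intended one.
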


Observe, that conjugate partitions of two commuting partitions might not be commuting. For example, partitions $(2,2,1,1)$ and $(2,1,1,1,1)$ are commuting by Theorem \ref{univ_commut}, while their conjugate partitions $(4,2)$ and $(5,1)$ are distinct RR partitions and therefore not commuting by Theorem \ref{RR do not commute}.

\subsection{Two parts partitions and commutativity}

Oblak \cite[Thm. 3.1]{Obl-3} proved that:

\begin{prop}\label{Two parts}
    Two distinct partitions of $n$ that have exactly two parts each are commuting if and only if $n$ is even, say $n=2k$ with $k\ge 2$, and one of partitions is equal to $(k,k)$ and the other is equal to $(k+1,k-1)$. 
\end{prop}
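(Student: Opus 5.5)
We prove both directions: first that the pair $(k,k)$, $(k+1,k-1)$ is commuting by an explicit construction, then that every other pair of distinct two-part partitions of $n$ is not commuting.

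\emph{Sufficiency.} Take $B$ of Jordan type $(k,k)$, realized as $J_k\otimes I_2$ on $F[t]/(t^k)\otimes F^2$. Its nilpotent commutator contains all block matrices $[p_{ij}(B)]$ with $p_{ij}(0)$ forming a nilpotent $2\times 2$ matrix. A concrete choice is
$$A=\begin{pmatrix} J_k & 0\\ I_k & J_k\end{pmatrix},$$
which is the same as $N\otimes I+I\otimes J_k$ after reordering. A direct computation of $\dim\ker A^j$, or equivalently of the string structure starting from a cyclic vector, should give Jordan type $(k+1,k-1)$.

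\emph{Necessity: reduction.} Let $B$ have type $Q=(q_1,q_2)$ with $q_1\ge q_2$, and let $A\in\Nil_B$ have type $P=(p_1,p_2)\neq Q$ with $p_2\ge 1$.
1. Since $A$ and $B$ commute, $\ker B$ is $A$-invariant. It is $2$-dimensional and $A$ is nilpotent on it, so $A|_{\ker B}$ is nilpotent.
2. I would use the block description of $\Nil_B$: $A=[A_{ij}]$ with upper-triangular Toeplitz blocks of the appropriate rectangular shapes.
3. The vector space $V$ is then a module over $R=F[x,y]/(\ldots)$ via $x\mapsto A$, $y\mapsto B$, and it is generated by at most two elements.

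\emph{Necessity: case split.} I would use the known fact that when $q_1-q_2\ge 2$ (so $Q$ is RR), generic elements of $\Nil_B$ have type $Q$, while every element has type at most $Q$ in dominance order. So it suffices to show that no element of $\Nil_B$ realizes a different two-part partition, except in the almost-rectangular case.

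\emph{Main step.} I would compute $\operatorname{rank}A^j$ explicitly. The diagonal blocks $A_{11}$ and $A_{22}$ are polynomials in $J_{q_1}$ and $J_{q_2}$. If either has a nonzero linear coefficient, the diagonal term dominates, and a rank count shows $P=Q$ up to the bounds given by the off-diagonal blocks. If both diagonal blocks lie in the square of the maximal ideal, then $A$ has at least three Jordan blocks unless $n$ is small.

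\emph{The case $q_1-q_2\le 1$.} For $Q=(k,k)$, the rank computation should show that the only other two-part type reachable is $(k+1,k-1)$. For $Q=(k+1,k)$ with $n$ odd, it should show that no other two-part type is reachable.

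\emph{Symmetric bookkeeping.} Since $\comm$ is symmetric, we may assume $Q$ is the larger partition in dominance order. This avoids having to treat each ordering of the pair separately.

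\emph{Main obstacle.} The hardest step is the rank computation excluding all other two-part types, especially in the RR case. There I would try to show that $p_1\ge q_1$ must hold. The plan is to exhibit a vector $v$ with $A^{q_1-1}v\neq 0$, using the linear coefficient of the diagonal blocks. Combining this with the dominance bound $P\le Q$ then gives $p_1=q_1$, and hence $P=Q$.
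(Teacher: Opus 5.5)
There is a genuine gap in the necessity part, and it sits exactly at the exceptional pair.

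Under your convention that $Q$ is the dominant partition, $Q$ is automatically RR: if $q_1>p_1$ then $q_1-q_2=(p_1-p_2)+2(q_1-p_1)\ge 2$. Moreover $p_1<q_1$. So the goal of your ``main obstacle'' step, $p_1\ge q_1$, would rule out \emph{every} distinct pair. That includes $Q=(k+1,k-1)$, $P=(k,k)$, which your sufficiency part shows is commuting.

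Both branches of your main step are in fact false. Write $B=t$ on $V=F[t]/(t^{q_1})e_1\oplus F[t]/(t^{q_2})e_2$ and $Ae_1=ae_1+be_2$, $Ae_2=ce_1+de_2$, with $c\in t^{q_1-q_2}F[t]$.
\begin{itemize}
\item For $Q=(3,1)$ let $Ae_1=te_1+e_2$, $Ae_2=-t^2e_1$. The diagonal entry $a=t$ has nonzero linear coefficient, yet $A^2e_1=te_2=0$ and $A^2e_2=0$. So $A$ has type $(2,2)$, and no $v$ with $A^{q_1-1}v\neq 0$ exists: the off-diagonal entries cancel the diagonal contribution.
\item For $Q=(4,2)$ let $Ae_1=e_2$, $Ae_2=t^2e_1$. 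Both diagonal entries vanish, yet the strings $e_1\mapsto e_2\mapsto t^2e_1\mapsto 0$ and $te_1\mapsto te_2\mapsto t^3e_1\mapsto 0$ give type $(3,3)$, with only two blocks.
\end{itemize}
So the missing idea is whatever separates $q_1-q_2=2$ from $q_1-q_2\ge 3$. The remaining steps (``the rank computation should show\ldots'') restate the claim rather than prove it. Dominance cannot supply the missing idea either: for an RR/AR pair both bounds of Proposition~\ref{D(P)-maximal} hold, since $\D$ of the AR partition is $(n)$.

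The survey itself gives no proof here; it only cites Oblak. A cleaner decomposition is available. Let Theorem~\ref{RR do not commute} dispose of RR/RR pairs, and note that the only non-RR two-part partitions are $(k,k)$ and $(k+1,k)$. Then put the AR partition on the $B$ side and prove an \emph{upper} bound: every $A\in\Nil_B$ with exactly two Jordan blocks satisfies $A^{k+1}=0$. Compare Oblak's bound quoted right after the statement.

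For $B$ of type $(k,k)$, set $R=F[t]/(t^k)$.
\begin{itemize}
\item If $AV\subseteq tV$, then $A^k=0$.
\item Otherwise $A \bmod t$ has rank one, and Nakayama gives an $R$-basis $v,Av$. Hence $V\cong F[t,x]/(t^k,\,x^2-\tau x+\delta)$ with $\tau,\delta\in(t)$. Two blocks means $\dim F[t]/(t^k,\delta)=2$, so $\delta\in(t^2)$ and $x^2\in(tx,t^2)$. By induction $x^m\in(t^m,t^{m-1}x)$, giving $x^{k+1}=0$.
\end{itemize}
The odd case is analogous: either $AV\subseteq tV$, or $b(0)\neq 0$ and $e_1,Ae_1$ generate $V$ over $F[t]$.

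Finally, your sufficiency example depends on the characteristic. $A^k$ has lower-left block $kJ_k^{k-1}$, so if $\operatorname{char}F$ divides $k$ your $A$ has type $(k,k)$. Since the paper allows any infinite field, you need a characteristic-free example. One option is the companion matrix over $R$ of $x^2-tx+ut^2$, with $u\neq 0$ chosen so that $x^k\neq 0$. This is possible because $x^k=c_k(u)t^{k-1}x$, where $c_k$ is a polynomial with $c_k(0)=1$.
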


Oblak \cite[Prop. 3.10]{Obl-3} also proved:

\begin{prop}
If $Q=(q_1,q_2,\ldots)$ is a partition commuting with partition $P=(k,k)$ then either $Q=(2k)$ or $q_1\leq k+1$.   
\end{prop}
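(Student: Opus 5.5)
We have to prove: if $Q\comm (k,k)$ then $Q=(2k)$ or $q_1\le k+1$. Equivalently, if $Q$ commutes with $(k,k)$ and $Q\neq(2k)$, then $Q$ has no part larger than $k+1$.

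\emph{Setup.} Let $B=J_k\oplus J_k$ act on $V=F^{2k}$ with basis $e_1,\dots,e_k,f_1,\dots,f_k$, where $Be_{i}=e_{i-1}$ and $Bf_i=f_{i-1}$. A matrix $A$ commuting with $B$ is a $2\times 2$ block matrix
\[
A=\begin{pmatrix} a(J_k) & b(J_k)\\ c(J_k) & d(J_k)\end{pmatrix},
\]
with upper triangular Toeplitz blocks, i.e.\ polynomials in $J_k$ modulo $x^k$. Thus $\Nil_B$ is identified with the nilpotent elements of $M_2(R)$, where $R=F[x]/(x^k)$ is local with maximal ideal $\mathfrak m=(x)$.

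\emph{Reduction mod $\mathfrak m$.} Reducing mod $\mathfrak m$ gives $\bar A\in M_2(F)$, which is nilpotent because $A$ is. There are two cases.

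\emph{Case $\bar A=0$.} Then $A=xA'$ with $A'\in M_2(R)$, so $A^j=x^jA'^j$ vanishes for $j\ge k$. Hence $A^k=0$ and all parts of $Q$ are at most $k\le k+1$.

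\emph{Case $\bar A\neq0$.} Then $\bar A$ is similar over $F$ to $\begin{pmatrix}0&1\\0&0\end{pmatrix}$. Lift this constant conjugation; it commutes with $B$, so we may assume
\[
A=\begin{pmatrix}a&1+b\\ c&d\end{pmatrix}, \qquad a,b,c,d\in\mathfrak m.
\]
Since $1+b$ is a unit of $R$, we can conjugate by diagonal and elementary matrices over $R$ (these lie in the centralizer of $B$). This brings $A$ to the companion form
\[
A\sim\begin{pmatrix}0&1\\ -\det A & \operatorname{tr}A\end{pmatrix}=:C .
\]
The module $R^2$ with $A$ acting is then the cyclic $R[y]$-module $R[y]/(y^2-ty+s)$, where $t=\operatorname{tr}A$ and $s=\det A$, both in $\mathfrak m$.

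\emph{Main step: the Jordan type of $C$.} Nilpotency of $A$ forces $t,s$ to lie in $\mathfrak m$, which already holds. The $F$-algebra generated by $x$ and $y$ acting on $V$ is $F[x,y]/(x^k,\,y^2-t(x)y+s(x))$. The largest Jordan block of $A$ is the nilpotency index of $y$ there. Write $t=x^{\alpha}u$ and $s=x^{\beta}v$ with $u,v$ units (or zero). One computes $y^{2m}\equiv$ a combination of terms in $x^{\min(m\beta,\ldots)}$, and in general $\dim V=2k$ with $y$ nilpotent.

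\emph{Alternative count via kernels.} A cleaner route is to count: $\dim\ker A\ge 1$ always. When $s=t=0$, $A$ is nilpotent on $R^2$ with $A^2=0$, which gives type $(2^k)$, and $2\le k+1$. When $t=0$ and $s=-x$ (the $\beta=1$ case), $y^2=x$ and $A$ is a single block, giving $Q=(2k)$.

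The key claim to establish is the following: if $\beta\ge2$ or $\alpha\ge1$ with the block not cyclic, then $\dim\ker A=2$, hence $Q$ has at most two parts. Indeed $\ker C$ contains the vectors $(r,0)$ with $sr=0$ and $rt=0$, plus more; $\dim\ker A=1$ happens exactly when $A$ is cyclic on $V$. Cyclicity forces $\dim V/(x,y)V=1$, i.e.\ $s\notin\mathfrak m^2$, which gives $\beta=1$.

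\emph{Conclusion in the two-part case.} If $Q\neq(2k)$, then $Q=(q_1,q_2)$ has two parts (it cannot have one part unless it is $(2k)$). Proposition~\ref{Two parts} then gives $Q=(k,k)$ or $Q=(k+1,k-1)$, so $q_1\le k+1$.

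\emph{Main obstacle.} The hardest step is justifying that $\dim\ker A\le 2$ and that $\dim\ker A=1$ forces $\beta=1$. I would handle it with the module-theoretic fact that $\dim\ker A=\dim V/AV=\dim R^2/CR^2$. Since $C$ is a companion matrix, $R^2/CR^2\cong R/(s)$, a cyclic $R$-module. Its minimal number of generators over $R$ is at most $2$, and $\dim_F(R/(s,x))$ is at most $1$. Combining these shows $Q$ has at most two parts, with exactly one part precisely when $A$ is cyclic.
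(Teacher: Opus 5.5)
\textbf{There is a genuine gap.} Your reduction to Proposition~\ref{Two parts} rests on the claim that $\dim\ker A\le 2$ whenever $A$ is not cyclic, and that claim is false.

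Your own computation $V/AV\cong R^2/CR^2\cong R/(s)$ is correct. But it gives $\dim_F\ker A=\dim_F R/(s)=\min(\beta,k)$, which is not bounded by $2$. You have confused the number of $R$-generators of $R/(s)$, which is $1$, with its $F$-dimension. Only the $F$-dimension counts Jordan blocks.

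Your own example already contradicts the claim: $s=t=0$ gives $Q=(2^k)$, which has $k$ parts. Other counterexamples:
\begin{itemize}
\item $t=x$, $s=0$ gives $y^n=x^{n-1}y$ and $Q=(k+1,1^{k-1})$;
\item $k=5$, $t=0$, $s=x^3$ gives $Q=(4,4,2)$.
\end{itemize}
So when $\beta\ge 2$, $Q$ usually has more than two parts, and the appeal to the two-part result does not apply.

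Two smaller problems. First, $\dim V/(x,y)V=1$ holds for every such $V$, because $V$ is a cyclic $F[x,y]$-module; the cyclicity test you want is $\dim V/yV=\dim_F R/(s)=1$, i.e.\ $\beta=1$. Second, the computation of $y^{2m}$ in your ``main step'' is never completed.

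\textbf{How to fix it.} What is missing is a direct bound on the largest Jordan block rather than on the number of parts, and your setup gives this quickly.

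By Cayley--Hamilton over $R$, $A^2=tA-sI$ with $t=\operatorname{tr}A$ and $s=\det A$, both in $\mathfrak m$.

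If $s\notin\mathfrak m^2$, then $\bar A\neq 0$, since $\bar A=0$ would force $s\in\mathfrak m^2$. Your companion-form argument then gives $\dim\ker A=\dim_F R/(s)=1$, so $Q=(2k)$.

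If $s\in\mathfrak m^2$ (this includes your case $\bar A=0$), write $A^n=a_nA+b_nI$. From $A^{n+1}=(a_nt+b_n)A-a_nsI$, induction gives $a_n\in(x^{n-1})$ and $b_n\in(x^n)$. Since $x^k=0$ in $R$, this yields $A^{k+1}=0$, so $q_1\le k+1$.

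This replaces the false part-count and makes your argument self-contained, with no need for Proposition~\ref{Two parts}.
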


Recently, authors in \cite[Thm. 3]{BDKOS} characterized all partitions of $n=2k$ that commute with $(k,k)$.

\subsection{Refinements}

A \emph{subpartition} of a partition $P$ is a subsequence of $P$. The sum of a subpartition is the sum of its parts. If a subsequence $Q$ is an almost rectangular partition then we call it \emph{almost rectangular subpartition} (AR subpartition for short). 

\begin{defn}
If $P$ and $Q$ are two partitions of $n$ then $P$ is a \emph{refinement} of $Q$ if each part of $Q$ is a sum of a subpartition of $P$ and the subpartitions involved cover exactly once all of the parts of $P$. So, $P$ is a refinement of $Q$ if $Q=(q_1,q_2,\ldots,q_k)$ and $P$ is the disjoint union of partitions $P_1$, $P_2$,\ldots, $P_k$ such that the sum of all parts of $P_i$ is equal to $q_i$ for all $i$.

We say that $P$ is an \emph{almost rectangular refinement (an AR refinement)} of $Q$ if all subpartitions $P_i$ involved in the refinement are almost rectangular. 
\end{defn}

Note that an AR refinement itself is not an AR partition in general. It is a union of AR partitions. Any partition is trivially an (AR) refinement of itself.

For instance, $P=(5,4,4,2,2,1,1,1)$ is an AR refinement of $(13,7)$. One has $P=[13]^3\cup[7]^5$. It is also AR refinement of $(9,5,4,2)$ since $P$ is the disjoint union of AR subpartitions $(5,4)$, $(2,1,1,1)$, $(4)$ and $(2)$. On the other hand, since $P=(4,4,2,2)\cup(5,1,1,1)$ it follows that $P$ is a refinement of $(12,8)$. The fact that $P$ is not an AR refinement of $(12,8)$ follows since $P$ contains no AR subpartition of $12$.

\vskip 3mm

Britnell and Wildon \cite[Prop. 4.5]{BW} proved the following result.

\begin{thm}\label{thm:refinements}
If $P$ and $Q$ are two almost rectangular refinements of a partition $R\in\Pset$ then $P\comm Q$. 
\end{thm}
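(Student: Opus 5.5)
The plan is to build the two commuting nilpotent matrices blockwise, starting from a single nilpotent matrix of Jordan type $R$. Write $R=(r_1,\ldots,r_k)$. Since $P$ and $Q$ are AR refinements of $R$, we can decompose
$$P=P_1\cup\cdots\cup P_k,\qquad Q=Q_1\cup\cdots\cup Q_k,$$
where $P_i$ and $Q_i$ are almost rectangular partitions of $r_i$. Note that the index $i$ refers to the same part $r_i$ in both decompositions. If equal parts of $R$ occur, the bookkeeping still works: we just fix one labelling of the parts of $R$ and use it for both refinements.

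\textbf{Step 1: the AR partitions of $r_i$.} An almost rectangular partition of $r_i$ with $m$ parts is uniquely determined by $m$; it is $[r_i]^{m}$. So $P_i=[r_i]^{a_i}$ and $Q_i=[r_i]^{b_i}$ for some $1\le a_i,b_i\le r_i$. By the one-Jordan-block discussion, for the nilpotent Jordan block $J_{r_i}$ the power $J_{r_i}^{a}$ has Jordan type $[r_i]^{a}$. Concretely, $J_{r_i}^a$ splits the cyclic basis $e_1,\ldots,e_{r_i}$ according to residues modulo $a$, giving $a$ chains whose lengths differ by at most one.

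\textbf{Step 2: assemble the matrices.} Set
$$A=\bigoplus_{i=1}^k J_{r_i}^{a_i},\qquad B=\bigoplus_{i=1}^k J_{r_i}^{b_i}.$$
Both are block diagonal with respect to the same decomposition of $F^n$. On each block both are polynomials in $J_{r_i}$, so the blocks commute and hence $AB=BA$. Both matrices are nilpotent. Jordan type is additive over direct sums, so
$$P_A=\bigcup_i [r_i]^{a_i}=P,\qquad P_B=\bigcup_i [r_i]^{b_i}=Q.$$
This gives $P\comm Q$.

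\textbf{Main obstacle.} The only point needing care is Step 1. We must verify that an AR partition of $r$ with $m$ parts is exactly the Jordan type of $J_r^m$, and that the AR pieces of the two refinements can be indexed by the same parts of $R$. The first is the standard residue-class computation, already stated in the Proposition on partitions commuting with $(n)$. The second is immediate from the definition of refinement once a labelling of the parts of $R$ is fixed.

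Note that the common matrix $J_R=\bigoplus_i J_{r_i}$ is not needed for the conclusion. It only explains why the construction works: both $A$ and $B$ lie in the commutative algebra $\bigoplus_i F[J_{r_i}]$.
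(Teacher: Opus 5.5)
Your proof is correct. It is essentially the argument the paper has in mind: the paper does not write out a proof but cites Britnell--Wildon, and your construction combines the proposition on a single Jordan block (the powers $J_{r}^{a}$ realize exactly the almost rectangular partitions $[r]^{a}$) with the proposition that unions of commuting partitions commute, which is precisely the direct-sum construction $\bigoplus_i J_{r_i}^{a_i}$, $\bigoplus_i J_{r_i}^{b_i}$; the paper's remark about the symmetric inverse semigroup is this same construction, since powers of a nilpotent Jordan block and their direct sums are rook matrices.
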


\begin{rem}
    Theorem \ref{thm:refinements} can be easily deduced also from the theory of the symmetric inverse semigroups $\IS(n)$ \cite{GaMa,Lip}. The semigroup $\IS(n)$ is represented by all $0$-$1$ square matrices of size $n$ that have at most one nonzero entry in each row and column \cite{Sol}. These matrices are also called rook matrices. For further results on  centralizers of  nilpotent elements in $\IS(n)$ see \cite{Kon-1,Kon-2,Lip}.
\end{rem}
\begin{rem}
    Notice that an RR partition is not an AR refinement of any other partition then itself. Furthermore, it is not enough to consider only pairs of AR refinements of RR partitions to obtain all pairs of AR refinements as in Theorem \ref{thm:refinements}. For instance, $P=(8,3^3)$ and $Q=(5,4,2^4)$ are both AR refinements of $R=(9,8)$ that is not an RR partition. Theorem \ref{thm:refinements} implies that partitions $(8,3^3)$ and $(5,4,2^4)$ are commuting.
\end{rem}

\subsection{Hook partitions and commutativity}
Authors of \cite{Kha-Kos} described all partitions that commute with a hook partition $(m,1^n)$ for $m\ge 3$ and $n\ge 1$. The special case $n=1$ was resolved earlier in \cite[Prop. 4.9]{BW}. The main result of \cite{Kha-Kos} states that any partition commuting with a hook partition $(n,1^m)$ contains as a subpartition an AR partition of either $n$, $n-1$ or $n-2$. In the case when the AR subpartition is an AR partition of $n$, the remaining subpartition is an arbitrary partition of $m$. In the other two cases, there are some technical conditions on the largest parts of the remaining subpartition of $m+1$ or $m+2$. 

\subsection{Union of commuting partitions is commuting}

Using direct sums of matrices we obtain also the following result.

\begin{prop}
If $(P_1,P_2)$ and $(Q_1,Q_2)$ are two pairs of commuting partitions then also $(P_1\cup Q_1,P_2\cup Q_2)$ is a pair of commuting partitions.    
\end{prop}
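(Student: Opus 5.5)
We prove the statement with a block-diagonal (direct sum) construction. By hypothesis $P_1\comm P_2$, so there exist commuting nilpotent matrices $A_1,B_1\in M_{n_1}(F)$ with $P_{A_1}=P_1$ and $P_{B_1}=P_2$. Note that $P_1$ and $P_2$ are then partitions of the same integer $n_1$. Likewise, $Q_1\comm Q_2$ gives commuting nilpotent $A_2,B_2\in M_{n_2}(F)$ with $P_{A_2}=Q_1$ and $P_{B_2}=Q_2$. We then set
$$A=A_1\oplus A_2=\begin{pmatrix}A_1&0\\0&A_2\end{pmatrix},\qquad B=B_1\oplus B_2=\begin{pmatrix}B_1&0\\0&B_2\end{pmatrix}\in M_{n_1+n_2}(F).$$

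The verification proceeds in three steps. First, block multiplication gives $AB=(A_1B_1)\oplus(A_2B_2)=(B_1A_1)\oplus(B_2A_2)=BA$, so $A$ and $B$ commute. Second, $A^k=A_1^k\oplus A_2^k$, so $A$ is nilpotent, and the same argument shows $B$ is nilpotent. Third, we identify the Jordan types. Choose invertible $S_1,S_2$ conjugating $A_1,A_2$ to their Jordan forms. Then $S_1\oplus S_2$ conjugates $A$ to the block-diagonal matrix whose Jordan blocks are exactly those of $A_1$ together with those of $A_2$. Hence $P_A=P_1\cup Q_1$, where $\cup$ denotes the multiset union of parts rearranged in nonincreasing order. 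In the same way $P_B=P_2\cup Q_2$.

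Alternatively, the third step can be checked without choosing conjugating matrices. We have $\dim\ker A^k=\dim\ker A_1^k+\dim\ker A_2^k$ for every $k$, and the Jordan type is determined by the sequence of these kernel dimensions. This identity is additive in exactly the way that corresponds to union of partitions.

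Therefore $P_1\cup Q_1\comm P_2\cup Q_2$. None of the steps is a real obstacle. The only point needing care is to state precisely that the Jordan type of a direct sum is the union of the Jordan types; this follows from uniqueness of the Jordan form, or from the kernel-dimension identity above.
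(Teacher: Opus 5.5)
Your proposal is correct and takes the same approach as the paper, which justifies the proposition in a single line, ``using direct sums of matrices.'' Your block-diagonal construction $A_1\oplus A_2$, $B_1\oplus B_2$, together with your check that the Jordan type of a direct sum is the union $P_1\cup Q_1$ (resp.\ $P_2\cup Q_2$), supplies exactly the details that one-line sketch leaves implicit.
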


For instance, Proposition \ref{Two parts} implies that $(5,3)$ and $(4,4)$ are commuting partitions, and also that $(4,2)$ and $(3,3)$ are commuting. Then partitions $(5,4,3,2)$ and $(4,4,3,3)$ are commuting. More generally, one proves using Proposition \ref{Two parts} that $(k+3,k+2,k+1,k)$ and $(k+2,k+2,k+1,k+1)$ are commuting partitions for any $k\in\N$.

\section{The dominant partition}\label{Dominant}


The set $\Nil_B$ of all nilpotent matrices commuting with $B$ is an irreducible algebraic variety \cite[Lem. 2.3]{Bas-2}. Therefore, intersection of $\Nil_B$ with one of the nilpotent orbits under $Gl_n(F)$ conjugation is a dense open subset in the Zariski topology on $M_n(F)$. The Jordan type of the dense orbit is denoted by $\D(P)$, where $P$ is the Jordan type of $B$. The map $\D$ was introduced by Panyushev \cite{Pan}. Here the choice of letter can refer to the dense orbit. By Proposition \ref{D(P)-maximal},  $\D(P)$ is also the dominant partition (in the dominance order) among all partitions that are commuting with $P$. For each partition $P$ we obviously have $P\comm\D(P)$. 

  We write $P\succeq Q$ for the \emph{dominance order} (also called the Bruhat order) on partitions. For partitions $P=(p_1,p_2,\ldots)$ and $Q=(q_1,q_2,\ldots)$ we have $P\succeq Q$ if and only if  
$$\sum_{i=1}^k p_i\geq\sum_{i=1}^k q_i$$
for all $k$.

The image of the map $\D:\Pset\to\Pset$ is equal to the set $\RRset$ of all {RR partitions}, i.e., partitions whose parts differ by at least $2$ \cite[Thm. 1.12]{BI}. The map $\D$ is idempotent \cite[Thm. 6]{KO}. 

The following is a known fact. It is stated e.g. by Khatami \cite[Prop. 3.2]{Kha-S}.

\begin{prop}\label{D(P)-maximal}
    Suppose that $P$ and $Q$ are partitions of $n$. If $P\comm Q$ then  $\D(P)\succeq Q$.
\end{prop}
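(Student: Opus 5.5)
The plan is to combine the irreducibility of $\Nil_B$ with the fact that dominance order on Jordan types is detected by ranks of powers, which are lower semicontinuous in the Zariski topology. Fix a nilpotent $B$ of Jordan type $P$. If $P\comm Q$, then after a simultaneous similarity there is $A\in\Nil_B$ with $P_A=Q$. We must show that $Q\preceq \D(P)$.

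First we translate dominance into ranks. For a nilpotent matrix $X$ of Jordan type $R=(r_1,r_2,\ldots)$ we have
$$\operatorname{rank} X^k=\sum_i \max(r_i-k,0)=n-\sum_{j=1}^k r'_j,$$
where $R^T=(r_1',r_2',\ldots)$ is the conjugate partition. It is a standard fact that $R\succeq S$ if and only if $S^T\succeq R^T$, that is, dominance reverses under conjugation. Hence $R\succeq S$ is equivalent to $\operatorname{rank}X^k\ge \operatorname{rank}Y^k$ for all $k$, where $X,Y$ have Jordan types $R,S$. So it suffices to prove $\operatorname{rank} C^k\ge\operatorname{rank}A^k$ for all $k$, where $C$ is any element of the dense orbit in $\Nil_B$.

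Next we use semicontinuity. For each $k$ the set $U_k=\{X\in\Nil_B:\operatorname{rank}X^k\ge\operatorname{rank}A^k\}$ is Zariski open in $\Nil_B$, since it is the complement of the vanishing locus of all minors of size $\operatorname{rank}A^k$ of $X^k$, which are polynomial in the entries of $X$. It is nonempty because it contains $A$. Since $\Nil_B$ is irreducible \cite[Lem. 2.3]{Bas-2}, every nonempty open subset is dense. Therefore the finite intersection $U_1\cap\cdots\cap U_n$ meets the dense open set $\Orb\cap\Nil_B$, where $\Orb$ is the orbit of type $\D(P)$. A matrix $C$ in this intersection has Jordan type $\D(P)$ and satisfies $\operatorname{rank}C^k\ge\operatorname{rank}A^k$ for all $k$, so $\D(P)\succeq Q$.

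The step needing the most care is the rank characterization of dominance, specifically the reversal of dominance under conjugation. I would verify it directly from the formula above: comparing partial sums of conjugates is the same as comparing $\sum_i\max(r_i-k,0)$, and the equivalence with partial sums of the $r_i$ is the classical Gale–Ryser/Brylawski lemma. I would cite it rather than reprove it. A minor point is that over an infinite, not necessarily algebraically closed, field the density arguments still hold, because $\Nil_B$ and the orbit intersection are defined over $F$ and $F$-points are dense.
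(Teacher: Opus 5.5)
Your proof is correct and is essentially the paper's argument. The paper uses irreducibility of $\Nil_B$ to get $\Nil_B\subseteq\overline{\Orb_{\D(P)}}$ and then cites the Gerstenhaber--Hesselink theorem; you run the same irreducibility argument through the open sets $U_k$, which amounts to proving directly the easy half of that theorem (lower semicontinuity of $\operatorname{rank}X^k$ plus the reversal of dominance under conjugation) — the only half the paper's proof actually needs.
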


\begin{proof}
    The intersection of $\Nil_B$ with the nilpotent orbit $\Orb_{\D(P)}$ is a dense open subset of $\Nil_B$. Since $\Nil_B$ is irreducible we have 
    $$\Nil_B=\overline{\Orb_{\D(P)}\cap\Nil_B}\subset \overline{\Orb_{\D(P)}}\cap\Nil_B\subset\Nil_B.$$
    Here $\overline{X}$ is the Zariski closure of the set $X$. The above chain of containments is in fact a chain of equalities, and so  
    $$\Nil_B=\overline{\Orb_{\D(P)}\cap\Nil_B} = \overline{\Orb_{\D(P)}}\cap\Nil_B.$$
    This implies that if $\Orb_Q$ is any orbit such that $\Nil_B\cap\Orb_Q\neq\emptyset$, then $\Orb_Q\subset\overline{\Orb_{\D(P)}}$. By the Gerstenhaber-Hesselink Theorem \cite[Thm. 6.2.5]{CoMcG} we have $\D(P)\succeq Q$.
\end{proof}

\begin{cor}\label{Q-maximal}
    For each partition $Q\in\RRset$ we have $Q\succeq P$ for any $P\in\D^{-1}(Q)$.
\end{cor}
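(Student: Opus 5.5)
The corollary follows directly from Proposition \ref{D(P)-maximal}, applied with the roles of the two partitions chosen so that $Q$ plays the part of the dominant partition. Let $Q\in\RRset$ and take any $P\in\D^{-1}(Q)$, so $\D(P)=Q$.

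First I check that $P\comm P$. The relation $\comm$ is reflexive: a nilpotent matrix $B$ of Jordan type $P$ commutes with itself.

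I then apply Proposition \ref{D(P)-maximal} to the commuting pair $(P,P)$. It gives $\D(P)\succeq P$, which is exactly $Q\succeq P$.

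Nothing here is really an obstacle. The one point to watch is that $P$ and $Q$ are partitions of the same $n$, so that the dominance order compares them. This holds automatically, since $\D(P)$ is by definition the Jordan type of a matrix of the same size as $B$.

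An alternative route, which I would mention only as a remark, uses $P\comm\D(P)=Q$ together with the idempotence $\D(Q)=\D(\D(P))=\D(P)=Q$. Applying Proposition \ref{D(P)-maximal} to the pair $(Q,P)$ then gives $\D(Q)\succeq P$, again $Q\succeq P$. The reflexivity argument is more direct, since it needs neither idempotence nor the hypothesis $Q\in\RRset$. That hypothesis only ensures $\D^{-1}(Q)$ is nonempty, because $Q=\D(Q)$.
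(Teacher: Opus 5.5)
Your proof is correct and matches the paper, which states this corollary without proof as an immediate consequence of Proposition \ref{D(P)-maximal}. Applying that proposition to the reflexive pair $P\comm P$ gives $Q=\D(P)\succeq P$; your alternative via $Q\comm P$ and $\D(Q)=Q$ is equally valid.
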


Among negative results, i.e., which pairs of partitions are not pairs of Jordan types of two commuting matrices, we state first an easy consequence of Proposition \ref{D(P)-maximal}.

\begin{thm}\label{RR do not commute}
    If $Q$ and $R$ are two distinct partitions in $\RRset$, then $Q\ncomm R$.
\end{thm}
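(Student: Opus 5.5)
The plan is to argue by contradiction, using Proposition \ref{D(P)-maximal}, the idempotence of $\D$ \cite[Thm. 6]{KO}, and the fact that the image of $\D$ is exactly $\RRset$. Suppose $Q,R\in\RRset$ are distinct and $Q\comm R$. Necessarily they are partitions of the same $n$, being Jordan types of commuting matrices of the same size.

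The first step is to check that $\D$ fixes every RR partition. Since the image of $\D$ is $\RRset$ \cite[Thm. 1.12]{BI}, any $Q\in\RRset$ can be written as $Q=\D(P)$ for some $P$. Idempotence then gives $\D(Q)=\D(\D(P))=\D(P)=Q$. In the same way, $\D(R)=R$.

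The second step applies Proposition \ref{D(P)-maximal} in both directions, using that $\comm$ is symmetric. From $Q\comm R$ we get $\D(Q)\succeq R$, that is, $Q\succeq R$. From $R\comm Q$ we get $\D(R)\succeq Q$, that is, $R\succeq Q$.

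The last step uses the antisymmetry of the dominance order on $\Pset(n)$. If $Q\succeq R$ and $R\succeq Q$, then all partial sums coincide, so every part coincides and $Q=R$. This contradicts the assumption that $Q$ and $R$ are distinct.

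There is essentially no obstacle here. The only point needing care is that the fixed-point property $\D(Q)=Q$ for $Q\in\RRset$ relies on both cited facts: the description of the image of $\D$ and the idempotence of $\D$. Alternatively, Corollary \ref{Q-maximal} together with $Q\in\D^{-1}(Q)$ gives the same comparisons.
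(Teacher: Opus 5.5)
Your proposal is correct and follows the paper's proof essentially step for step. You obtain $Q=\D(Q)$ and $R=\D(R)$ from idempotence together with $\im\D=\RRset$, apply Proposition \ref{D(P)-maximal} in both directions, and conclude by antisymmetry of the dominance order (which the paper states simply as ``possible if and only if $Q=R$'').
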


\begin{proof}
    Since $\D$ is idempotent and $Q,R\in\im\D$, we have $Q=\D(Q)$ and $R=\D(R)$. 
    
    We assume that the nilpotent orbit $\Orb_Q$ intersects $\Nil_B$, where $R$ is the Jordan type of $B$. We want to obtain a contradiction. By Proposition \ref{D(P)-maximal} our assumption implies that $R=\D(R)\succeq Q$. By reversing the roles of $R$ and $Q$ we also obtain that $Q=\D(Q)\succeq R$, which is possible if and only if $Q=R$. This contradicts the assumption that $Q$ and $R$ are distinct.
\end{proof}

A partition $Q\in\RRset$ is the element with the least number of parts among all partitions in the inverse image $\D^{-1}(Q)$. For each $Q\in\RRset$ there is also a unique partition with the maximal number of parts in $\D^{-1}(Q)$. We denote this unique partition by $Q_{\textrm{mx}}$. Example 3 in \cite{IKM} shows that in general the partition $Q_{\textrm{mx}}$ is not the minimal partition in $\D^{-1}(Q)$ in the dominance order. (The same can be observed also in Table~1 below where partitions $(8,4,1^5)$ and $(8,3^2,1^3)$ are incomparable in the dominance order.) So, $Q_{\textrm{mx}}$ is not in general the minimal element of $\D^{-1}(Q)$ in the Bruhat order. An example in \cite[Example 6]{Kha-Kos} also shows that in general $Q_{mx} $ does not commute with all other partitions in $\D^{-1}(Q)$.


\section{The Box Theorem}

A conjecture regarding the structure of the inverse images $\mathcal{D}^{-1}(Q)$ for $Q \in \mathcal{Q}$ was proposed in \cite[Conj. 4.11]{IKvSZ}. The name the \emph{Box Conjecture} was given to the conjecture since it was conjectured that  the  elements of  $\D^{-1}(Q)$ for a partition $Q=(q_1,q_2,\ldots,q_k)\in\RRset$ can be arranged in a box (i.e. an array) of sizes $q_k\times (q_{k-1}-q_k-1)\times \cdots\times (q_1-q_2-1)$ such that the partition in the $(i_1,i_2,\cdots,i_k)$-th position has exactly $\sum_{j=1}^k i_j$ parts. The conjecture was proved in \cite{IKvSZ} for partitions with at most $2$ parts. 
In \cite{IKM}, the authors provide a proof of the conjecture for a general $Q\in\RRset$. The main tools in the proof are the Burge correspondence \cite{Bur-1,Bur-2} (cf. also \cite{AB,Bre}) between the set of all partitions $\Pset$ and binary words, and Shayman's description \cite{Sha-1,Sha-2} of all invariant subspaces of a nilpotent matrix. Here we provide an overview of the proof. For the precise statement of The Box Theorem proceed to Theorem \ref{BoxThm}.

\subsection{The Burge correspondence}

To introduce the correspondence in \cite{IKM} representation of partitions by their \emph{frequency vectors} is used. Here we briefly review this correspondence. It is very natural to use the frequency representation of partitions in connection with the Burge correspondence. However, when working with Jordan type, it is more common to represent a partition with its parts since they represent sizes of Jordan blocks. To accommodate both views we briefly review the Burge correspondence on frequency vectors and then also include a longer discussion using the parts representation of partitions. Thus, we complement the proof in \cite{IKM}.

For a partition $P$ we write $f_j=f_j(P)$, $j\in\N$, for the number of parts in $P$ that are equal to $j$. The \emph{frequency vector of} $P$ is then the sequence $f=f(P)=(f_1,f_2,\ldots)$. Here $f$ could be limited to the first $p_1$ elements, but for convenience we may augment it with an arbitrary finite sequence of zeros. We denote the set of all frequency vectors by $\Fset$. Equivalently, $\Fset$ is the set of all finite sequences of nonnegative integers. For an $f\in\Fset$ we denote by $P(f)$ the corresponding partition in $\Pset$, i.e., the partition whose frequency vector is equal to $f$. When working with $f \in \Fset$, we adopt the convention that $f_0=0$.

Next we introduce notation for some important invariants (statistics) of a partition $P\in\Pset$ using both types of representation of a partition -- by its parts and by its frequency vector. \emph{The size of a partition} $P=(p_1,p_2,\ldots,p_k)$ or \emph{of its frequency vector} $f=f(P)=(f_1,f_2,\ldots,f_{p_1})$ is equal to the sum of its parts
$$|P|=|f|=\sum_{j=1}^k p_j=\sum_{j=1}^{p_1} jf_j,$$ 
\emph{the length of a partition or of its frequency vector} is the number of its parts
$$\len{P}=\len{f}=k=\sum_{j=1}^{p_1}f_j. $$  
The \emph{empty partition} is the unique partition $\varepsilon\in\Pset$ of size (and length) $0$. The $2$-\emph{measure} $\two{P}=\two{f}$ of $P$ or of $f=f(P)$ is the maximum length of an RR subpartition of $P$, i.e., a subpartition with parts differing by at least $2$. Note that $\two{P}$ is also the minimal number of AR partitions needed to cover $P$, i.e., a minimal number of partitions in an AR refinement of $P$. 

The concept of $k$-measure for $k\ge 1$ was introduced in combinatorics by Andrews, Bhattacharjee and Dastidar \cite{ABD} to study sets of partitions. See \cite{ACL,Irv} for further results. The $k$-measure $\mu_k(P)$ of a partition $P$ is the length of the longest subsequence of $P$ whose parts differ by at least $k$.

Let us illustrate the notation with an example. 
\begin{exmp}
Suppose $P=(9,6,6,5,4,2,1,1,1,1)$. Then its frequency vector is $f(P)=(4,1,0,1,1,2,0,0,1)$, $|P|=36,\len{P}=10$ and $\mu_2(P)=4.$
\end{exmp}

The \emph{support} of $P\in\Pset$ or of the corresponding $f=f(P)\in\Fset$ is the set of all $j$ such that $j$ is a part in $P$, or equivalently, all $j$ such that $f_j\neq 0$. We denote it by $\supp(P)=\supp(f)$. 

A \emph{spread} of $f\in\Fset$ or of a partition $P=P(f)$ is defined as a maximal interval $[i,j]\subset\supp(f)=\supp(P)$. Finally, we define  sets 
\begin{align*}
\textstyle
	\Il(f) &= \Il(P) = \bigcup \{i, i+2, \ldots, i+2\lfloor\tfrac{j-i}{2}\rfloor\} \\
	\Ir(f) &= \Ir(P) = \bigcup \{j, j-2, \ldots, j-2\lfloor\tfrac{j-i}{2}\rfloor\}, 
\end{align*}
where the unions extend over all spreads $[i,j]$ of $f$ or $P=P(f)$. 
Observe that they are both of equal size which is equal to $\two{f}=\two{P}$. Note also that for a spread $f=[i,j]$ of odd length $j-i+1$ we have $\Il(f)=\Ir(f)$, while for a spread $f=[i,j]$ of even length we have $\Il(f)\cap\Ir(f)=\emptyset$ and $\Il(f)\cup\Ir(f)=[i,j]$.

Let us consider an example.

\begin{exmp}
The partition corresponding to frequency vector $$f=(4,2,1,0,0,3,2,1,2,0,0,2)$$ is  
$$P=P(f) = (12^2,9^2,8,7^2,6^3,3,2^2,1^4),$$ 
We have $|f|=|P|=93$ and $\len{f}=\len{P}=17$.  The spreads of $f$ are $\{1,2,3\}$, $\{6,7,8,9\}$ and $\{12\}$, so  $\Il(f)=\{1,3,6,8,12\}$, $\Ir(f)=\{1, 3, 7, 9, 12\}$ and $\two{f}=\two{P}=5$. 
\end{exmp}

We now partition $\Pset$ in two disjoint sets: $$\AP = \{P \in \Pset \,:\, 1 \not \in \Ir(P)\}\ \text{and}\ \BP = \{P \in \Pset \,:\, 1 \in \Ir(P)\}.$$ 
Note the equivalent characterization of the partition is given as follows:  $P \in \BP$ if and only if $1$ is contained in a spread of $f(P)$ of odd size.

We now  introduce two  transformations $\map{\a}{\Pset}{\AP}$ and $\map{\b}{\Pset}{\BP}$.

Each of the maps $\a$ and $\b$ acts as a sequence of raising operators on $P$ or $f=f(P)$.  The precise recipes are as follows:
\begin{itemize}
\item	$\a(P)$ is obtained from $P$ by replacing  one part of size $i$ with one part of size $i+1$ for each $i \in \Il(P)$. 

\item	$\b(P)$ is obtained from $P$ by adding one part of size $1$, and then acting as $\a$ on the partition $P'$ that is obtained from $P$ by omitting all the parts of size $1$.
In terms of the frequency vectors one has
$$\b(P) = (1^{f_1+1}, \a(P(f_2,f_3,\ldots))).$$
\end{itemize}

\begin{lem}\cite[Lem. 6]{IKM}\label{Lem:abpt}
 Maps $\a$ and $\b$ are bijections from $\Pset$ to $\AP$ and $\BP$, respectively. Moreover, $\AP\cup\BP=\Pset$ and $\AP\cap\BP=\emptyset$.
\end{lem}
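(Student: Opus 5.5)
The plan is to work entirely with frequency vectors. The statement $\AP\cup\BP=\Pset$, $\AP\cap\BP=\emptyset$ holds by definition, since every $P$ either has $1\in\Ir(P)$ or not. The content is therefore that $\a$ and $\b$ are bijections onto $\AP$ and $\BP$. For $\a$ I would construct an explicit inverse. Let $\map{\delta}{\AP}{\Pset}$ send $g$ to the frequency vector obtained by replacing one part of size $r$ with one part of size $r-1$ for every $r\in\Ir(g)$. This is well defined on $\AP$, because $1\notin\Ir(g)$ means we never create a part of size $0$. I would then prove three things: $\a(P)\in\AP$ for all $P$, $\delta\circ\a=\mathrm{id}_{\Pset}$, and $\a\circ\delta=\mathrm{id}_{\AP}$.

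The local computation is done spread by spread. Take a spread $[i,j]$ of $f$. If its length is odd, then $\Il$ on it is $\{i,i+2,\ldots,j\}$. Raising at these positions moves mass onto $i+1,\ldots,j+1$, and the new block of support is $[i,j+1]$, of even length, with right set $\{j+1,j-1,\ldots,i+1\}$. Lowering there returns exactly $f$ on this block. If the length is even, $\Il=\{i,i+2,\ldots,j-1\}$, the new block is $[i,j]$, still of even length, with right set $\{j,j-2,\ldots,i+1\}$, and again lowering recovers $f$. In both cases every spread of $\a(f)$ obtained this way has even length, so its right set never contains its left endpoint. In particular, if $1$ lies in the support of $\a(f)$, it lies in an even spread, so $1\notin\Ir(\a(f))$ and $\a(f)\in\AP$. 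For the reverse identity I would run the same computation backwards: lowering at $\Ir$ on an even spread $[i,j]$ of $g$ produces a block whose left set is exactly the set of positions lowered into. Odd spreads of $g$ are handled analogously, since on them $\Il=\Ir$.

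The main obstacle is that spreads do not behave independently, and this has to be handled carefully. Raising at a position $l$ with $f_l=1$ empties that position and can split a spread. Raising at the right end $j$ of an odd spread puts mass at $j+1$, which may make it adjacent to the next spread starting at $j+2$ and merge the two. I would treat this by an induction on the number of spreads, or alternatively on $|f|$. The idea is to show that the spread structure of $\a(f)$ is a refinement or merge of the pieces above in a controlled way. The key point to verify is that, in every case, the parity pattern of $\Ir(\a(f))$ restricted to each old block coincides with the shifted set $\Il(f)+1$. Splittings occur only at positions of $\Il(f)$ emptied by raising, and merges join an even block to the next block at an odd offset, so parities should still align. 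If this bookkeeping becomes unwieldy, I would fall back to checking that the pair $(\Il(f)+1,\;\text{lowered vector})$ determines $f$ uniquely, which gives injectivity, and then count elements of $\AP$ of each size to obtain surjectivity.

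For $\b$ I would reduce to $\a$. Write $P=(1^{f_1})\cup P'$, where $P'$ has all parts $\ge 2$, and shift $P'$ down by one to a partition $\tilde P$. Then $\b$ is: increase $f_1$ by one, apply $\a$ to $\tilde P$, and shift back. By the first part, the shifted image has $2\notin\Ir$ of its own spread containing $2$, since that spread is even (or $2$ is absent). Prepending $1$ with positive frequency therefore makes the spread containing $1$ odd, so $\b(P)\in\BP$. Conversely, for $g\in\BP$ we have $g_1\ge1$, and the part of $g$ supported on $\{2,3,\ldots\}$, shifted down, lies in $\AP$. Hence the inverse of $\b$ removes one part $1$ and applies $\delta$ to the rest. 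The only interaction to check is that the odd spread through $1$ splits into $\{1\}$ together with an even spread starting at $2$, and this is again the local computation above.
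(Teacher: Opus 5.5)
Your overall architecture is sound. $\AP\cap\BP=\emptyset$ and $\AP\cup\BP=\Pset$ are immediate from the definitions. Lowering one part at each position of $\Ir$ is the right candidate for $\alpha^{-1}$. Your reduction of $\beta$ to $\alpha$ (strip the parts equal to $1$, shift down, apply $\alpha$, shift back) is correct, and so is your parity argument that the spread of $\beta(P)$ through $1$ is odd once $\alpha(\tilde P)\in\AP$ is known.

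\textbf{The gap.} The whole lemma rests on two identities:
\[
\Ir(\alpha(f))=\Il(f)+1 \quad\text{for all } f, \qquad \Il(\delta(g))=\Ir(g)-1 \quad\text{for } g\in\AP.
\]
You do not prove either of them. ``Parities should still align'' is exactly the content that has to be shown.

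Moreover, the local computation you offer as support is false. The image of a spread need not be $[i,j+1]$ or $[i,j]$, and spreads of $\alpha(f)$ need not have even length. Two examples:
\begin{itemize}
\item For $P=(3,2,1)$ one gets $\alpha(P)=(4,2,2)$, with spreads $\{2\}$ and $\{4\}$.
\item For $P=(4,3,3,2,1)$ one gets $\alpha(P)=(4,4,3,2,2)$, with the single spread $[2,4]$.
\end{itemize}
All these spreads have odd length, and each right set contains its left endpoint. So your derivation of $\alpha(f)\in\AP$ from ``every spread is even'' is invalid, even though the conclusion is true.

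The fallback does not rescue this. Recovering $f$ from $\alpha(f)$ requires reading off $\Il(f)+1$ from $\alpha(f)$, which is the same identity. Counting elements of $\AP$ by size cannot give surjectivity, because $|\alpha(P)|=|P|+\mu_2(P)$, so $\alpha$ does not preserve size.

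\textbf{The missing idea: a domino decomposition.} It makes any induction on spreads unnecessary.
\begin{itemize}
\item Put $L=\Il(f)$. Inside each spread of $f$, positions alternate between $L$ and $L+1$, starting in $L$. If a spread has odd length, its right end $j$ lies in $L$ while $j+1\notin\supp(f)$.
\item Hence $L\cap(L+1)=\emptyset$ and $\supp(f)\subseteq L\cup(L+1)$. The set $L\cup(L+1)$ is a disjoint union of dominoes $\{l,l+1\}$ with $l\in L$, and along any run of consecutive integers in it, left and right cells alternate.
\item The vector $g=\alpha(f)$ has $g_{l+1}=f_{l+1}+1>0$ and $g_l=f_l-1$ for $l\in L$, and vanishes off $L\cup(L+1)$. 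So every left cell in $\supp(g)$ is followed by its right cell, and every spread of $g$ ends at a right cell.
\item Therefore the right set of each spread of $g$ is exactly the set of right cells it contains. This holds whether the spread starts at a left cell (even length) or at a right cell (odd length, which is the splitting you worried about); merges are absorbed the same way.
\item This gives $\Ir(\alpha(f))=\Il(f)+1$, hence $\delta\circ\alpha=\mathrm{id}$. It also gives $\alpha(f)\in\AP$, since $1\in\Il(f)+1$ would force $0\in\Il(f)$.
\end{itemize}

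The mirror argument handles $\delta$. Use $R=\Ir(g)$ and dominoes $\{r-1,r\}$; here $1\notin R$ guarantees $R-1\subseteq\{1,2,\ldots\}$. Then every spread of $\delta(g)$ starts at a left cell $r-1$, so $\Il(\delta(g))=R-1$ and $\alpha\circ\delta=\mathrm{id}_{\AP}$. With these two identities in place, the rest of your proposal, including the part for $\beta$, goes through.
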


Lemma \ref{Lem:abpt} implies that $\a$ and $\b$ together with $\a^{-1}$ and $\b^{-1}$ generate the so-called \emph{polycyclic (inverse) monoid on two generators} \cite[\S{9.3}]{Law}. We may join the inverses and define a new map $\partial:\Pset\to\Pset$:
$$\pt(P) = \begin{cases}
	\a^{-1}(P) & \text{if $P \in \AP$,} \\
	\b^{-1}(P) & \text{if $P \in \BP$.}
	\end{cases} $$ 
	  \mbox{}\vspace{-.5cm}

\begin{lem}\cite[Lem. 6]{IKM}\label{Lem:abpt-2}
 Let $\a,\b,\pt$ be defined on $\Pset$ as above.  Then map $\map{\pt}{\Pset}{\Pset}$ is 2-to-1.
\end{lem}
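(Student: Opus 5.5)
The 2-to-1 property of $\pt$ follows directly from Lemma \ref{Lem:abpt}, and the proof is pure bookkeeping on preimages. Fix $Q\in\Pset$. I will show that $\pt^{-1}(Q)$ consists of exactly the two partitions $\a(Q)$ and $\b(Q)$, and that these two are distinct.

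\emph{Existence.} By Lemma \ref{Lem:abpt}, $\a(Q)\in\AP$, so by the definition of $\pt$ on $\AP$ we get $\pt(\a(Q))=\a^{-1}(\a(Q))=Q$. In the same way, $\b(Q)\in\BP$ gives $\pt(\b(Q))=\b^{-1}(\b(Q))=Q$. Here we use that $\a$ and $\b$ are bijections onto $\AP$ and $\BP$ respectively, so that $\a^{-1}$ and $\b^{-1}$ are well defined on those sets.

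\emph{Distinctness.} Since $\AP\cap\BP=\emptyset$ (again Lemma \ref{Lem:abpt}), we have $\a(Q)\neq\b(Q)$. One can also see this directly: $\b(Q)$ has $f_1(Q)+1$ parts equal to $1$, while $\a(Q)$ has either $f_1(Q)$ or $f_1(Q)-1$ such parts, because $\a$ removes at most one part of size $1$ and adds none.

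\emph{Uniqueness.} Let $P$ satisfy $\pt(P)=Q$. Because $\AP\cup\BP=\Pset$ and the union is disjoint, exactly one of two cases occurs.
\begin{itemize}
\item If $P\in\AP$, then $\a^{-1}(P)=Q$, so $P=\a(Q)$.
\item If $P\in\BP$, then $\b^{-1}(P)=Q$, so $P=\b(Q)$.
\end{itemize}
Hence $\pt^{-1}(Q)=\{\a(Q),\b(Q)\}$, a set of exactly two elements. In particular $\pt$ is surjective and every fibre has size two.

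The real content lies in Lemma \ref{Lem:abpt}, which we are allowed to assume. That lemma requires checking three things: that $\a(P)$ never has $1$ in a spread of odd length, that $\b(P)$ always does, and that both maps can be inverted by reading off $\Il$ from $\Ir$ of the image. For the statement at hand, the only point needing care is that $\pt$ is well defined, which holds because $\AP$ and $\BP$ partition $\Pset$.
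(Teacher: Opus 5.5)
Your proof is correct and is essentially the intended argument: the paper imports this from \cite{IKM} as a direct consequence of Lemma~\ref{Lem:abpt}. The fibre of $\pt$ over any $Q$ is exactly $\{\a(Q),\b(Q)\}$, and these two partitions are distinct because $\a(Q)\in\AP$, $\b(Q)\in\BP$ and $\AP\cap\BP=\emptyset$.
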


\begin{exmp}
    
Given a partition $P=(6,4,4,3,3,2,1)$ of $n=23$ we show how the maps $\a$ and $\b$ work:

\vspace{3mm}
 \begin{minipage}[c]{0.125\textwidth}   \begin{tikzpicture}[scale=0.3] 
        \def\squaresize{0.8}

        \foreach \x in {0,...,5} {
            \pgfmathsetmacro{\currentx}{\x * \squaresize} 
            \ifnum\x=6 
                \filldraw[fill=white, draw=black] (\currentx,0) rectangle ++(\squaresize,\squaresize);
            \else
                \filldraw[fill=white, draw=black] (\currentx,0) rectangle ++(\squaresize,\squaresize);
            \fi
        }
        \foreach \x in {0,...,3} {
            \pgfmathsetmacro{\currentx}{\x * \squaresize} 
            \pgfmathsetmacro{\currenty}{-\squaresize} 
            \filldraw[fill=white, draw=black] (\currentx,\currenty) rectangle ++(\squaresize,\squaresize);
        }
        \foreach \x in {0,...,3} {
            \pgfmathsetmacro{\currentx}{\x * \squaresize} 
            \pgfmathsetmacro{\currenty}{-2 * \squaresize} 
            \filldraw[fill=white, draw=black] (\currentx,\currenty) rectangle ++(\squaresize,\squaresize);
        }
        \foreach \x in {0,...,2} {
            \pgfmathsetmacro{\currentx}{\x * \squaresize} 
            \pgfmathsetmacro{\currenty}{-3 * \squaresize} 
            \ifnum\x=3 
                \filldraw[fill=white, draw=black] (\currentx,\currenty) rectangle ++(\squaresize,\squaresize);
            \else
                \filldraw[fill=white, draw=black] (\currentx,\currenty) rectangle ++(\squaresize,\squaresize);
            \fi
        }
        \foreach \x in {0,...,2} {
            \pgfmathsetmacro{\currentx}{\x * \squaresize} 
            \pgfmathsetmacro{\currenty}{-4 * \squaresize} 
            \filldraw[fill=white, draw=black] (\currentx,\currenty) rectangle ++(\squaresize,\squaresize);
        }
        \foreach \x in {0,...,1} {
            \pgfmathsetmacro{\currentx}{\x * \squaresize} 
            \pgfmathsetmacro{\currenty}{-5 * \squaresize} 
            \ifnum\x=1 
                \filldraw[fill=white, draw=black] (\currentx,\currenty) rectangle ++(\squaresize,\squaresize);
            \else
                \filldraw[fill=white, draw=black] (\currentx,\currenty) rectangle ++(\squaresize,\squaresize);
            \fi
        }
        \pgfmathsetmacro{\currenty}{-6 * \squaresize} 
        \filldraw[fill=white, draw=black] (0,\currenty) rectangle ++(\squaresize,\squaresize);
    \end{tikzpicture}
    \end{minipage}
\begin{minipage}[c]{0.1\textwidth}
        $\xlongrightarrow{\a}$ 
        \end{minipage}
\begin{minipage}[c]{0.2\textwidth}
  \begin{tikzpicture}[scale=0.3] 
        \def\squaresize{0.8}

        \foreach \x in {0,...,6} {
            \pgfmathsetmacro{\currentx}{\x * \squaresize} 
            \ifnum\x=6 
                \filldraw[fill=red, draw=black] (\currentx,0) rectangle ++(\squaresize,\squaresize);
            \else
                \filldraw[fill=white, draw=black] (\currentx,0) rectangle ++(\squaresize,\squaresize);
            \fi
        }
        \foreach \x in {0,...,3} {
            \pgfmathsetmacro{\currentx}{\x * \squaresize} 
            \pgfmathsetmacro{\currenty}{-\squaresize} 
            \filldraw[fill=white, draw=black] (\currentx,\currenty) rectangle ++(\squaresize,\squaresize);
        }
        \foreach \x in {0,...,3} {
            \pgfmathsetmacro{\currentx}{\x * \squaresize} 
            \pgfmathsetmacro{\currenty}{-2 * \squaresize} 
            \filldraw[fill=white, draw=black] (\currentx,\currenty) rectangle ++(\squaresize,\squaresize);
        }
        \foreach \x in {0,...,3} {
            \pgfmathsetmacro{\currentx}{\x * \squaresize} 
            \pgfmathsetmacro{\currenty}{-3 * \squaresize} 
            \ifnum\x=3 
                \filldraw[fill=red, draw=black] (\currentx,\currenty) rectangle ++(\squaresize,\squaresize);
            \else
                \filldraw[fill=white, draw=black] (\currentx,\currenty) rectangle ++(\squaresize,\squaresize);
            \fi
        }
        \foreach \x in {0,...,2} {
            \pgfmathsetmacro{\currentx}{\x * \squaresize} 
            \pgfmathsetmacro{\currenty}{-4 * \squaresize} 
            \filldraw[fill=white, draw=black] (\currentx,\currenty) rectangle ++(\squaresize,\squaresize);
        }
        \foreach \x in {0,...,1} {
            \pgfmathsetmacro{\currentx}{\x * \squaresize} 
            \pgfmathsetmacro{\currenty}{-5 * \squaresize} 
            \ifnum\x=1 
                \filldraw[fill=white, draw=black] (\currentx,\currenty) rectangle ++(\squaresize,\squaresize);
            \else
                \filldraw[fill=white, draw=black] (\currentx,\currenty) rectangle ++(\squaresize,\squaresize);
            \fi
        }
           \foreach \x in {0,...,1} {
          \pgfmathsetmacro{\currentx}{\x * \squaresize} 
        \pgfmathsetmacro{\currenty}{-6 * \squaresize} 
         \ifnum\x=1 
                \filldraw[fill=red, draw=black] (\currentx,\currenty) rectangle ++(\squaresize,\squaresize);
            \else
                \filldraw[fill=white, draw=black] (\currentx,\currenty) rectangle ++(\squaresize,\squaresize);
            \fi    
            }
            \end{tikzpicture},
\end{minipage}
\begin{minipage}[c]{0.125\textwidth}
    \begin{tikzpicture}[scale=0.3] 
        \def\squaresize{0.8}

        \foreach \x in {0,...,5} {
            \pgfmathsetmacro{\currentx}{\x * \squaresize} 
            \ifnum\x=6 
                \filldraw[fill=white, draw=black] (\currentx,0) rectangle ++(\squaresize,\squaresize);
            \else
                \filldraw[fill=white, draw=black] (\currentx,0) rectangle ++(\squaresize,\squaresize);
            \fi
        }
        \foreach \x in {0,...,3} {
            \pgfmathsetmacro{\currentx}{\x * \squaresize} 
            \pgfmathsetmacro{\currenty}{-\squaresize} 
            \filldraw[fill=white, draw=black] (\currentx,\currenty) rectangle ++(\squaresize,\squaresize);
        }
        \foreach \x in {0,...,3} {
            \pgfmathsetmacro{\currentx}{\x * \squaresize} 
            \pgfmathsetmacro{\currenty}{-2 * \squaresize} 
            \filldraw[fill=white, draw=black] (\currentx,\currenty) rectangle ++(\squaresize,\squaresize);
        }
        \foreach \x in {0,...,2} {
            \pgfmathsetmacro{\currentx}{\x * \squaresize} 
            \pgfmathsetmacro{\currenty}{-3 * \squaresize} 
            \ifnum\x=3 
                \filldraw[fill=white, draw=black] (\currentx,\currenty) rectangle ++(\squaresize,\squaresize);
            \else
                \filldraw[fill=white, draw=black] (\currentx,\currenty) rectangle ++(\squaresize,\squaresize);
            \fi
        }
        \foreach \x in {0,...,2} {
            \pgfmathsetmacro{\currentx}{\x * \squaresize} 
            \pgfmathsetmacro{\currenty}{-4 * \squaresize} 
            \filldraw[fill=white, draw=black] (\currentx,\currenty) rectangle ++(\squaresize,\squaresize);
        }
        \foreach \x in {0,...,1} {
            \pgfmathsetmacro{\currentx}{\x * \squaresize} 
            \pgfmathsetmacro{\currenty}{-5 * \squaresize} 
            \ifnum\x=1 
                \filldraw[fill=white, draw=black] (\currentx,\currenty) rectangle ++(\squaresize,\squaresize);
            \else
                \filldraw[fill=white, draw=black] (\currentx,\currenty) rectangle ++(\squaresize,\squaresize);
            \fi
        }
        \pgfmathsetmacro{\currenty}{-6 * \squaresize} 
        \filldraw[fill=white, draw=black] (0,\currenty) rectangle ++(\squaresize,\squaresize);
    \end{tikzpicture}
\end{minipage}
\begin{minipage}[c]{0.1\textwidth}
        $\xlongrightarrow{\b}$ 
        \end{minipage}
\begin{minipage}[c]{0.2\textwidth}
\begin{tikzpicture}[scale=0.3] 
        \def\squaresize{0.8}

        \foreach \x in {0,...,6} {
            \pgfmathsetmacro{\currentx}{\x * \squaresize} 
            \ifnum\x=6 
                \filldraw[fill=red, draw=black] (\currentx,0) rectangle ++(\squaresize,\squaresize);
            \else
                \filldraw[fill=white, draw=black] (\currentx,0) rectangle ++(\squaresize,\squaresize);
            \fi
        }
        \foreach \x in {0,...,4} {
            \pgfmathsetmacro{\currentx}{\x * \squaresize} 
            \pgfmathsetmacro{\currenty}{-\squaresize} 
            \ifnum\x=4 
                \filldraw[fill=red, draw=black] (\currentx,\currenty) rectangle ++(\squaresize,\squaresize);
            \else
                \filldraw[fill=white, draw=black] (\currentx,\currenty) rectangle ++(\squaresize,\squaresize);
            \fi
        }        
                        \foreach \x in {0,...,3} {
            \pgfmathsetmacro{\currentx}{\x * \squaresize} 
        \pgfmathsetmacro{\currenty}{-2 * \squaresize} 
        \filldraw[fill=white, draw=black] (\currentx,\currenty) rectangle ++(\squaresize,\squaresize);
        }
                        \foreach \x in {0,...,2} {
                          \pgfmathsetmacro{\currentx}{\x * \squaresize} 
        \pgfmathsetmacro{\currenty}{-3 * \squaresize} 
        \filldraw[fill=white, draw=black] (\currentx,\currenty) rectangle ++(\squaresize,\squaresize);}
                \foreach \x in {0,...,2} {
                       \pgfmathsetmacro{\currentx}{\x * \squaresize} 
        \pgfmathsetmacro{\currenty}{-4 * \squaresize} 
        \filldraw[fill=white, draw=black] (\currentx,\currenty) rectangle ++(\squaresize,\squaresize);}

        \foreach \x in {0,...,2} {
            \pgfmathsetmacro{\currentx}{\x * \squaresize} 
            \pgfmathsetmacro{\currenty}{-5 * \squaresize} 
            \ifnum\x=2 
                \filldraw[fill=red, draw=black] (\currentx,\currenty) rectangle ++(\squaresize,\squaresize);
            \else
                \filldraw[fill=white, draw=black] (\currentx,\currenty) rectangle ++(\squaresize,\squaresize);
            \fi
        }

        \pgfmathsetmacro{\currenty}{-6 * \squaresize} 
        \filldraw[fill=white, draw=black] (0,\currenty) rectangle ++(\squaresize,\squaresize);
        \pgfmathsetmacro{\currenty}{-7 * \squaresize} 
        \filldraw[fill=red, draw=black] (0,\currenty) rectangle ++(\squaresize,\squaresize); 
    \end{tikzpicture}.
    \end{minipage}
\vspace{2mm}\\
 So $\a(P)=(7,4,4,4,3,2,2)$ and $\b(P)=(7,5,4,3,3,3,1,1)$. Observe that 
$|\a(P)|=26$ and $|\b(P)|=27$.
\end{exmp}

Note that we have $0 \leq |\pt(P)|<|P|$ for all $P \neq \emptyf$. Therefore, applying $\pt$ repeatedly to any $P \in \Pset$ will result in $\pt^{k}P=\emptyf$ for some least positive integer $k \geq 1$. This is used in the following definitions.

\begin{defn}
We define the \emph{Burge chain} of $P\in \Pset$ to be the sequence
$\pti{P}{0}, \pti{P}{1}, \pti{P}{2}, \ldots, \pti{P}{k}$, where we write $\pti{P}{i}=\partial^iP$ and $k$ is
 the smallest positive integer satisfying $\pti{P}{k}=\emptyf$.
The  \emph{Burge code} of $P$ is the binary word $\burge{P} = \w_1 \w_2 \cdots \w_{k+1} \in \{\a,\b\}^*$, the set of all words on two letters $\a$ and $\b$, defined by 
$$
	\w_i = \begin{cases}
	\a & \text{if $\pti{P}{i-1} \in \AP$,} \\
	\b & \text{if $\pti{P}{i-1} \in \BP$.}
	\end{cases}
$$ 
\end{defn}

So $P=\emptyf$ has trivial Burge chain $\emptyf$ and Burge code $\burge{\emptyf}=\a$, whereas the  chains/codes of all other partitions $P$ are of length at least 2. In fact since $\pt P = \emptyf$ only for $P = \emptyf$ and $P=(1)$, the chain of every $P \neq \emptyf$  ends with $\pti{P}{k-1} = (1) \in \BP$ and $\pti{P}{k} = \emptyf \in \AP$, and thus the code $\burge{P}$ ends with $\w_{k}\w_{k+1}=\b\a$.

Since $\pt|_{\AP}=\a^{-1}$ and $\pt|_{\BP}=\b^{-1}$, the definition of $\burge{P}=\w_1\cdots\w_n$   ensures that $\pti{P}{i-1}= (\w_i \circ \pt)(\pti{P}{i-1})=\w_i(\pti{P}{i})$ for all $i$. Thus we have
$$
P =\w_1(\pti{P}{1})=\w_1\w_2(\pti{P}{2})=\w_1\w_2\w_3(\pti{P}{3})=\cdots=(\w_1\w_2\cdots\w_n)(\emptyf),
$$
where the products of  $\w_i$ are to be interpreted as functional composition in the usual right-to-left order.

\begin{prop}\cite[Prop. 9]{IKM}
The Burge encoding $P \mapsto \burge{P}$ is a one-one correspondence between $\Pset$ and the set $\Wset=(\a^*\b)^*\a$ of all finite words on  $\{\a,\b\}$ that end with a singleton $\a$.
\end{prop}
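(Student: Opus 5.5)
We have to show that $P\mapsto\burge{P}$ is injective, that its image lies in $\Wset=(\a^*\b)^*\a$, and that every word of $\Wset$ occurs as some $\burge{P}$. Lemma \ref{Lem:abpt} and the observations just before the statement already provide what is needed. The argument mirrors the fact that every element of a polycyclic monoid acting freely on a rooted binary tree is determined by its path from the root.

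\emph{Image lands in $\Wset$.} For $P=\emptyf$ the code is $\a\in\Wset$. For $P\neq\emptyf$ the chain ends with $\pti{P}{k-1}=(1)\in\BP$ and $\pti{P}{k}=\emptyf\in\AP$, so $\burge{P}$ ends with $\b\a$. The last letter $\a$ is therefore preceded by $\b$, and the last letter is a singleton $\a$. Any word in $\{\a,\b\}^*$ ending in $\b$ lies in $(\a^*\b)^*$, so $\burge{P}\in(\a^*\b)^*\a$.

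\emph{Injectivity.} We use the identity $P=(\w_1\w_2\cdots\w_{k+1})(\emptyf)$ established above. The code of $P$ determines the composite map, and hence $P$ itself. Concretely, if $\burge{P}=\burge{P'}$, then both equal the same composite of $\a,\b$ applied to $\emptyf$, so $P=P'$.

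\emph{Surjectivity.} Let $w=\w_1\cdots\w_{m}\a\in\Wset$ and put $P=(\w_1\cdots\w_m\a)(\emptyf)$. Since $\a(\emptyf)=\emptyf$ (because $\Il(\emptyf)=\emptyset$), we have $P=(\w_1\cdots\w_m)(\emptyf)$. We prove $\burge{P}=w$ by induction on the number $m$ of letters before the final $\a$.

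For $m=0$, $P=\emptyf$ and $\burge{\emptyf}=\a$.

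For $m\ge1$, set $P_1=(\w_2\cdots\w_m)(\emptyf)$. Then $P=\w_1(P_1)$, so $P\in\AP$ if $\w_1=\a$ and $P\in\BP$ if $\w_1=\b$. This is the membership part of Lemma \ref{Lem:abpt}: $\a$ maps into $\AP$, $\b$ maps into $\BP$, and the two sets are disjoint. Consequently the first letter of $\burge{P}$ is $\w_1$ and $\pt(P)=\w_1^{-1}(P)=P_1$.

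The delicate point is the inductive step: we must check that $\w_2\cdots\w_m\a$ still lies in $\Wset$ and is not the degenerate word that produces a spurious leading run of $\a$'s on $\emptyf$. The only danger is $\w_1=\a$ with $P=\emptyf$, which would give $\pt(P)=\emptyf$ and cause the code to stop early. This cannot occur, because a nonempty word in $(\a^*\b)^*$ contains a $\b$. Hence $P_1$ is built using at least one $\b$, so $|P_1|\ge1$. Since each of $\a,\b$ does not decrease size, and $\a$ is injective with $\a(\emptyf)=\emptyf$, it follows that $P\neq\emptyf$. Therefore $\w_1=\a$ does not fix the partition, and the chain continues exactly as the word prescribes.

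For the induction itself: if $\w_2\cdots\w_m$ is empty, then $\w_1=\b$ and $P=(1)$ with $\burge{P}=\b\a$. Otherwise $\w_2\cdots\w_m$ still ends in $\b$, so $\w_2\cdots\w_m\a\in\Wset$ and the induction hypothesis gives $\burge{P_1}=\w_2\cdots\w_m\a$. Prepending $\w_1$ yields $\burge{P}=w$.
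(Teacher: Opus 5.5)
Your proof is correct and follows essentially the route the paper takes. The paper has no written proof and cites \cite[Prop. 9]{IKM}, but the remarks just before the statement (every code of a nonempty partition ends in $\b\a$, and $P=(\w_1\cdots\w_n)(\emptyf)$) are exactly what you use for the image lying in $\Wset$ and for injectivity. Your surjectivity argument fills in the rest: you peel off the first letter using that $\a$ and $\b$ map bijectively onto the disjoint sets $\AP$ and $\BP$, and you correctly rule out $P=\emptyf$ by noting that any nonempty word in $(\a^*\b)^*$ contains a $\b$.
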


We refer to \cite{Bre} for an extension of the one-to-one correspondence that is a three-way bijection between $\Pset$, $\Wset$ and a set of certain up-down paths.

\begin{exmp}
For partition $P=(7,6,4,4,2,2,1,1,1)$ of $n=28$ one has:
\vskip 6mm
   
\begin{minipage}[c]{0.12\textwidth}
    \begin{tikzpicture}[scale=0.3] 
        \def\squaresize{0.8}

        \foreach \x in {0,...,6} {
            \pgfmathsetmacro{\currentx}{\x * \squaresize} 
            \ifnum\x=6 
                \filldraw[fill=red, draw=black] (\currentx,0) rectangle ++(\squaresize,\squaresize);
            \else
                \filldraw[fill=white, draw=black] (\currentx,0) rectangle ++(\squaresize,\squaresize);
            \fi
        }
        \foreach \x in {0,...,5} {
            \pgfmathsetmacro{\currentx}{\x * \squaresize} 
            \pgfmathsetmacro{\currenty}{-\squaresize} 
            \filldraw[fill=white, draw=black] (\currentx,\currenty) rectangle ++(\squaresize,\squaresize);
        }
        \foreach \x in {0,...,3} {
            \pgfmathsetmacro{\currentx}{\x * \squaresize} 
            \pgfmathsetmacro{\currenty}{-2 * \squaresize} 
            \filldraw[fill=white, draw=black] (\currentx,\currenty) rectangle ++(\squaresize,\squaresize);
        }
        \foreach \x in {0,...,3} {
            \pgfmathsetmacro{\currentx}{\x * \squaresize} 
            \pgfmathsetmacro{\currenty}{-3 * \squaresize} 
            \ifnum\x=3 
                \filldraw[fill=red, draw=black] (\currentx,\currenty) rectangle ++(\squaresize,\squaresize);
            \else
                \filldraw[fill=white, draw=black] (\currentx,\currenty) rectangle ++(\squaresize,\squaresize);
            \fi
        }
        \foreach \x in {0,...,1} {
            \pgfmathsetmacro{\currentx}{\x * \squaresize} 
            \pgfmathsetmacro{\currenty}{-4 * \squaresize} 
            \filldraw[fill=white, draw=black] (\currentx,\currenty) rectangle ++(\squaresize,\squaresize);
        }
        \foreach \x in {0,...,1} {
            \pgfmathsetmacro{\currentx}{\x * \squaresize} 
            \pgfmathsetmacro{\currenty}{-5 * \squaresize} 
            \ifnum\x=1 
                \filldraw[fill=red, draw=black] (\currentx,\currenty) rectangle ++(\squaresize,\squaresize);
            \else
                \filldraw[fill=white, draw=black] (\currentx,\currenty) rectangle ++(\squaresize,\squaresize);
            \fi
        }
        \pgfmathsetmacro{\currenty}{-6 * \squaresize} 
        \filldraw[fill=white, draw=black] (0,\currenty) rectangle ++(\squaresize,\squaresize);
        \pgfmathsetmacro{\currenty}{-7 * \squaresize} 
        \filldraw[fill=white, draw=black] (0,\currenty) rectangle ++(\squaresize,\squaresize);
        \pgfmathsetmacro{\currenty}{-8 * \squaresize} 
        \filldraw[fill=white, draw=black] (0,\currenty) rectangle ++(\squaresize,\squaresize);
    \end{tikzpicture}
\end{minipage}
\begin{minipage}[c]{0.05\textwidth}
    $\xlongrightarrow{\partial}$
    \end{minipage}
\begin{minipage}[c]{0.125\textwidth}
\begin{tikzpicture}[scale=0.3] 
        \def\squaresize{0.8}

        \foreach \x in {0,...,5} {
            \pgfmathsetmacro{\currentx}{\x * \squaresize} 
            \filldraw[fill=white, draw=black] (\currentx,0) rectangle ++(\squaresize,\squaresize);
        }
        \foreach \x in {0,...,5} {
            \pgfmathsetmacro{\currentx}{\x * \squaresize} 
            \pgfmathsetmacro{\currenty}{-\squaresize} 
            \ifnum\x=5 
                \filldraw[fill=red, draw=black] (\currentx,\currenty) rectangle ++(\squaresize,\squaresize);
            \else
                \filldraw[fill=white, draw=black] (\currentx,\currenty) rectangle ++(\squaresize,\squaresize);
            \fi
        }
        \foreach \x in {0,...,3} {
            \pgfmathsetmacro{\currentx}{\x * \squaresize} 
            \pgfmathsetmacro{\currenty}{-2 * \squaresize} 
            \ifnum\x=3 
                \filldraw[fill=red, draw=black] (\currentx,\currenty) rectangle ++(\squaresize,\squaresize);
            \else
                \filldraw[fill=white, draw=black] (\currentx,\currenty) rectangle ++(\squaresize,\squaresize);
            \fi
        }
        \foreach \x in {0,...,2} {
            \pgfmathsetmacro{\currentx}{\x * \squaresize} 
            \pgfmathsetmacro{\currenty}{-3 * \squaresize} 
            \filldraw[fill=white, draw=black] (\currentx,\currenty) rectangle ++(\squaresize,\squaresize);
        }
        \foreach \x in {0,...,1} {
            \pgfmathsetmacro{\currentx}{\x * \squaresize} 
            \pgfmathsetmacro{\currenty}{-4 * \squaresize} 
            \ifnum\x=1 
                \filldraw[fill=red, draw=black] (\currentx,\currenty) rectangle ++(\squaresize,\squaresize);
            \else
                \filldraw[fill=white, draw=black] (\currentx,\currenty) rectangle ++(\squaresize,\squaresize);
            \fi
        }
        \pgfmathsetmacro{\currenty}{-5 * \squaresize} 
        \filldraw[fill=white, draw=black] (0,\currenty) rectangle ++(\squaresize,\squaresize);
        \pgfmathsetmacro{\currenty}{-6 * \squaresize} 
        \filldraw[fill=white, draw=black] (0,\currenty) rectangle ++(\squaresize,\squaresize);
        \pgfmathsetmacro{\currenty}{-7 * \squaresize} 
        \filldraw[fill=white, draw=black] (0,\currenty) rectangle ++(\squaresize,\squaresize);
        \pgfmathsetmacro{\currenty}{-8 * \squaresize} 
        \filldraw[fill=white, draw=black] (0,\currenty) rectangle ++(\squaresize,\squaresize);
    \end{tikzpicture}
    \end{minipage}
\begin{minipage}[c]{0.05\textwidth}
    $\xlongrightarrow{\partial}$
    \end{minipage}
\begin{minipage}[c]{0.125\textwidth}
    \begin{tikzpicture}[scale=0.3] 
        \def\squaresize{0.8}

        \foreach \x in {0,...,5} {
            \pgfmathsetmacro{\currentx}{\x * \squaresize} 
            \ifnum\x=5 
                \filldraw[fill=red, draw=black] (\currentx,0) rectangle ++(\squaresize,\squaresize);
            \else
                \filldraw[fill=white, draw=black] (\currentx,0) rectangle ++(\squaresize,\squaresize);
            \fi
        }
        \foreach \x in {0,...,4} {
            \pgfmathsetmacro{\currentx}{\x * \squaresize} 
            \pgfmathsetmacro{\currenty}{-\squaresize} 
            \filldraw[fill=white, draw=black] (\currentx,\currenty) rectangle ++(\squaresize,\squaresize);
        }
        \foreach \x in {0,...,2} {
            \pgfmathsetmacro{\currentx}{\x * \squaresize} 
            \pgfmathsetmacro{\currenty}{-2 * \squaresize} 
            \filldraw[fill=white, draw=black] (\currentx,\currenty) rectangle ++(\squaresize,\squaresize);
        }
        \foreach \x in {0,...,2} {
            \pgfmathsetmacro{\currentx}{\x * \squaresize} 
            \pgfmathsetmacro{\currenty}{-3 * \squaresize} 
            \ifnum\x=2 
                \filldraw[fill=red, draw=black] (\currentx,\currenty) rectangle ++(\squaresize,\squaresize);
            \else
                \filldraw[fill=white, draw=black] (\currentx,\currenty) rectangle ++(\squaresize,\squaresize);
            \fi
        }
        \pgfmathsetmacro{\currenty}{-4 * \squaresize} 
        \filldraw[fill=white, draw=black] (0,\currenty) rectangle ++(\squaresize,\squaresize);
        \pgfmathsetmacro{\currenty}{-5 * \squaresize} 
        \filldraw[fill=white, draw=black] (0,\currenty) rectangle ++(\squaresize,\squaresize);
        \pgfmathsetmacro{\currenty}{-6 * \squaresize} 
        \filldraw[fill=white, draw=black] (0,\currenty) rectangle ++(\squaresize,\squaresize);
        \pgfmathsetmacro{\currenty}{-7 * \squaresize} 
        \filldraw[fill=white, draw=black] (0,\currenty) rectangle ++(\squaresize,\squaresize);
        \pgfmathsetmacro{\currenty}{-8 * \squaresize} 
        \filldraw[fill=red, draw=black] (0,\currenty) rectangle ++(\squaresize,\squaresize); 
    \end{tikzpicture}
    \end{minipage}
\begin{minipage}[c]{0.05\textwidth}
    $\xlongrightarrow{\partial}$
    \end{minipage}
\begin{minipage}[c]{0.125\textwidth}
\begin{tikzpicture}[scale=0.3] 
        \def\squaresize{0.8}

        \foreach \x in {0,...,4} {
            \pgfmathsetmacro{\currentx}{\x * \squaresize} 
            \filldraw[fill=white, draw=black] (\currentx,0) rectangle ++(\squaresize,\squaresize);
        }
        \foreach \x in {0,...,4} {
            \pgfmathsetmacro{\currentx}{\x * \squaresize} 
            \pgfmathsetmacro{\currenty}{-\squaresize} 
            \ifnum\x=4 
                \filldraw[fill=red, draw=black] (\currentx,\currenty) rectangle ++(\squaresize,\squaresize);
            \else
                \filldraw[fill=white, draw=black] (\currentx,\currenty) rectangle ++(\squaresize,\squaresize);
            \fi
        }
        \foreach \x in {0,...,2} {
            \pgfmathsetmacro{\currentx}{\x * \squaresize} 
            \pgfmathsetmacro{\currenty}{-2 * \squaresize} 
            \ifnum\x=2 
                \filldraw[fill=red, draw=black] (\currentx,\currenty) rectangle ++(\squaresize,\squaresize);
            \else
                \filldraw[fill=white, draw=black] (\currentx,\currenty) rectangle ++(\squaresize,\squaresize);
            \fi
        }
        \foreach \x in {0,...,1} {
            \pgfmathsetmacro{\currentx}{\x * \squaresize} 
            \pgfmathsetmacro{\currenty}{-3 * \squaresize} 
            \filldraw[fill=white, draw=black] (\currentx,\currenty) rectangle ++(\squaresize,\squaresize);
        }
        \pgfmathsetmacro{\currenty}{-4 * \squaresize} 
        \filldraw[fill=white, draw=black] (0,\currenty) rectangle ++(\squaresize,\squaresize);
        \pgfmathsetmacro{\currenty}{-5 * \squaresize} 
        \filldraw[fill=white, draw=black] (0,\currenty) rectangle ++(\squaresize,\squaresize);
        \pgfmathsetmacro{\currenty}{-6 * \squaresize} 
        \filldraw[fill=white, draw=black] (0,\currenty) rectangle ++(\squaresize,\squaresize);
        \pgfmathsetmacro{\currenty}{-7 * \squaresize} 
        \filldraw[fill=red, draw=black] (0,\currenty) rectangle ++(\squaresize,\squaresize); 
    \end{tikzpicture}
    \end{minipage}
\begin{minipage}[c]{0.05\textwidth}
    $\xlongrightarrow{\partial}$
    \end{minipage}
\begin{minipage}[c]{0.125\textwidth}
\begin{tikzpicture}[scale=0.3] 
        \def\squaresize{0.8}

        \foreach \x in {0,...,4} {
            \pgfmathsetmacro{\currentx}{\x * \squaresize}
            \ifnum\x=4 
                \filldraw[fill=red, draw=black] (\currentx,0) rectangle ++(\squaresize,\squaresize);
            \else
                \filldraw[fill=white, draw=black] (\currentx,0) rectangle ++(\squaresize,\squaresize);
            \fi
        }
        \foreach \x in {0,...,3} {
            \pgfmathsetmacro{\currentx}{\x * \squaresize}
            \pgfmathsetmacro{\currenty}{-\squaresize} 
            \filldraw[fill=white, draw=black] (\currentx,\currenty) rectangle ++(\squaresize,\squaresize);
        }
        \foreach \x in {0,...,1} {
            \pgfmathsetmacro{\currentx}{\x * \squaresize}
            \pgfmathsetmacro{\currenty}{-2 * \squaresize} 
            \filldraw[fill=white, draw=black] (\currentx,\currenty) rectangle ++(\squaresize,\squaresize);
        }
        \foreach \x in {0,...,1} {
            \pgfmathsetmacro{\currentx}{\x * \squaresize}
            \pgfmathsetmacro{\currenty}{-3 * \squaresize} 
            \ifnum\x=1 
                \filldraw[fill=red, draw=black] (\currentx,\currenty) rectangle ++(\squaresize,\squaresize);
            \else
                \filldraw[fill=white, draw=black] (\currentx,\currenty) rectangle ++(\squaresize,\squaresize);
            \fi
        }
        \pgfmathsetmacro{\currenty}{-4 * \squaresize}
        \filldraw[fill=white, draw=black] (0,\currenty) rectangle ++(\squaresize,\squaresize);
        \pgfmathsetmacro{\currenty}{-5 * \squaresize}
        \filldraw[fill=white, draw=black] (0,\currenty) rectangle ++(\squaresize,\squaresize);
        \pgfmathsetmacro{\currenty}{-6 * \squaresize}
        \filldraw[fill=white, draw=black] (0,\currenty) rectangle ++(\squaresize,\squaresize);
        \pgfmathsetmacro{\currenty}{-7 * \squaresize}
        \filldraw[fill=white, draw=black] (0,\currenty) rectangle ++(\squaresize,\squaresize);
    \end{tikzpicture}
    \end{minipage}
\begin{minipage}[c]{0.3\textwidth}
    $\xlongrightarrow{\partial}\quad\cdots$
    \end{minipage}

\vskip 7mm
 So the Burge code $\Omega(P)$ starts with $\a\a\b\b\a\ldots$ One can check that complete Burge code of $P$ 
 is $\Omega(P)=\a\a\b\b\a\a\b\b\b\a\a\b\b\b\b\a$.
 \label{exmp:burge}
\end{exmp}

\begin{exmp}

Word $\a\a\a\a\b\a$ corresponds to partition $(5)$: 
\vskip 7mm
\begin{minipage}[c]{0.02\textwidth}
 $\emptyset$
\end{minipage}
\begin{minipage}[c]{0.05\textwidth}
    $\xlongrightarrow{\beta}$
\end{minipage}
\begin{minipage}[c]{0.03\textwidth}
  \begin{tikzpicture}[scale=0.35] 
        \def\squaresize{0.8}

        \filldraw[fill=white, draw=black] ({0},{0}) rectangle ++(\squaresize,\squaresize);
    \end{tikzpicture}
\end{minipage}
\begin{minipage}[c]{0.05\textwidth}
    $\xlongrightarrow{\alpha}$
\end{minipage}
\begin{minipage}[c]{0.06\textwidth}
       \begin{tikzpicture}[scale=0.35] 
        \def\squaresize{0.8}

        \foreach \x in {0,...,1} {
            \pgfmathsetmacro{\currentx}{\x * \squaresize}
            \filldraw[fill=white, draw=black] ({\currentx},{0}) rectangle ++(\squaresize,\squaresize);
        }
    \end{tikzpicture}
\end{minipage}
\begin{minipage}[c]{0.05\textwidth}
    $\xlongrightarrow{\a}$
\end{minipage}
\begin{minipage}[c]{0.07\textwidth}
\begin{tikzpicture}[scale=0.35] 
        \def\squaresize{0.8}

        \foreach \x in {0,...,2} {
            \pgfmathsetmacro{\currentx}{\x * \squaresize}
            \filldraw[fill=white, draw=black] ({\currentx},{0}) rectangle ++(\squaresize,\squaresize);
        }
  
    \end{tikzpicture}
\end{minipage}
\begin{minipage}[c]{0.05\textwidth}
    $\xlongrightarrow{\a}$
\end{minipage}
\begin{minipage}[c]{0.09\textwidth}
   \begin{tikzpicture}[scale=0.35] 
        \def\squaresize{0.8}

        \foreach \x in {0,...,3} {
            \pgfmathsetmacro{\currentx}{\x * \squaresize}
            \filldraw[fill=white, draw=black] ({\currentx},{0}) rectangle ++(\squaresize,\squaresize);
        }
   
    \end{tikzpicture}
\end{minipage}
\begin{minipage}[c]{0.05\textwidth}
    $\xlongrightarrow{\alpha}$
\end{minipage}
\begin{minipage}[c]{0.11\textwidth}
   \begin{tikzpicture}[scale=0.35] 
        \def\squaresize{0.8}

        \foreach \x in {0,...,4} {
            \pgfmathsetmacro{\currentx}{\x * \squaresize}
            \filldraw[fill=white, draw=black] ({\currentx},{0}) rectangle ++(\squaresize,\squaresize);
        }
   
    \end{tikzpicture}
\end{minipage}

\vskip 5mm
Word $\b\b\b\b\a$ corresponds to partition $(1,1,1,1)$: 
\vskip 7mm

\begin{minipage}[c]{0.02\textwidth}
 $\emptyset$
\end{minipage}
\begin{minipage}[c]{0.05\textwidth}
    $\xlongrightarrow{\beta}$
\end{minipage}
\begin{minipage}[c]{0.04\textwidth}
  \begin{tikzpicture}[scale=0.35] 
        \def\squaresize{0.8}

        \filldraw[fill=white, draw=black] ({0},{0}) rectangle ++(\squaresize,\squaresize);
    \end{tikzpicture}
\end{minipage}
\begin{minipage}[c]{0.05\textwidth}
    $\xlongrightarrow{\b}$
\end{minipage}
\begin{minipage}[c]{0.06\textwidth}
       \begin{tikzpicture}[scale=0.35] 
        \def\squaresize{0.8}

        \filldraw[fill=white, draw=black] ({0},{0}) rectangle ++(\squaresize,\squaresize);
        \pgfmathsetmacro{\currenty}{-1 * \squaresize}
        \filldraw[fill=white, draw=black] ({0},{\currenty}) rectangle ++(\squaresize,\squaresize);
 
    \end{tikzpicture}
\end{minipage}
\begin{minipage}[c]{0.05\textwidth}
    $\xlongrightarrow{\b}$
\end{minipage}
\begin{minipage}[c]{0.06\textwidth}
 \begin{tikzpicture}[scale=0.35] 
        \def\squaresize{0.8}

        \filldraw[fill=white, draw=black] ({0},{0}) rectangle ++(\squaresize,\squaresize);
        \pgfmathsetmacro{\currenty}{-1 * \squaresize}
        \filldraw[fill=white, draw=black] ({0},{\currenty}) rectangle ++(\squaresize,\squaresize);
        \pgfmathsetmacro{\currenty}{-2 * \squaresize}
        \filldraw[fill=white, draw=black] ({0},{\currenty}) rectangle ++(\squaresize,\squaresize);
  
    \end{tikzpicture}
\end{minipage}
\begin{minipage}[c]{0.05\textwidth}
    $\xlongrightarrow{\b}$
\end{minipage}
\begin{minipage}[c]{0.12\textwidth}
  \begin{tikzpicture}[scale=0.35] 
        \def\squaresize{0.8}

        \filldraw[fill=white, draw=black] ({0},{0}) rectangle ++(\squaresize,\squaresize);
        \pgfmathsetmacro{\currenty}{-1 * \squaresize}
        \filldraw[fill=white, draw=black] ({0},{\currenty}) rectangle ++(\squaresize,\squaresize);
        \pgfmathsetmacro{\currenty}{-2 * \squaresize}
        \filldraw[fill=white, draw=black] ({0},{\currenty}) rectangle ++(\squaresize,\squaresize);
        \pgfmathsetmacro{\currenty}{-3 * \squaresize}
        \filldraw[fill=white, draw=black] ({0},{\currenty}) rectangle ++(\squaresize,\squaresize);
    \end{tikzpicture}
\end{minipage}

\vskip 5mm
Given word $\w=\a\b\b\a\b\a \in \Wset$ we draw Ferrers diagrams for emerging partitions and obtain $P(\w)=(5,2,1)$:

\vskip 2mm
\begin{minipage}[c]{0.02\textwidth}
 $\emptyset$
\end{minipage}
\begin{minipage}[c]{0.05\textwidth}
    $\xlongrightarrow{\beta}$
\end{minipage}
\begin{minipage}[c]{0.05\textwidth}
  \begin{tikzpicture}[scale=0.35] 
        \def\squaresize{0.8}

        \filldraw[fill=white, draw=black] ({0},{0}) rectangle ++(\squaresize,\squaresize);
    \end{tikzpicture}
\end{minipage}
\begin{minipage}[c]{0.05\textwidth}
    $\xlongrightarrow{\alpha}$
\end{minipage}
\begin{minipage}[c]{0.06\textwidth}
       \begin{tikzpicture}[scale=0.35] 
        \def\squaresize{0.8}

        \foreach \x in {0,...,1} {
            \pgfmathsetmacro{\currentx}{\x * \squaresize}
            \filldraw[fill=white, draw=black] ({\currentx},{0}) rectangle ++(\squaresize,\squaresize);
        }
    \end{tikzpicture}
\end{minipage}
\begin{minipage}[c]{0.05\textwidth}
    $\xlongrightarrow{\beta}$
\end{minipage}
\begin{minipage}[c]{0.08\textwidth}
       \begin{tikzpicture}[scale=0.35] 
        \def\squaresize{0.8}

        \foreach \x in {0,...,2} {
            \pgfmathsetmacro{\currentx}{\x * \squaresize}
            \filldraw[fill=white, draw=black] ({\currentx},{0}) rectangle ++(\squaresize,\squaresize);
        }
        \pgfmathsetmacro{\currenty}{-\squaresize}
        \filldraw[fill=white, draw=black] ({0},{\currenty}) rectangle ++(\squaresize,\squaresize);
    \end{tikzpicture}
\end{minipage}
\begin{minipage}[c]{0.05\textwidth}
    $\xlongrightarrow{\beta}$
\end{minipage}
\begin{minipage}[c]{0.1\textwidth}
   \begin{tikzpicture}[scale=0.35] 
        \def\squaresize{0.8}

        \foreach \x in {0,...,3} {
            \pgfmathsetmacro{\currentx}{\x * \squaresize}
            \filldraw[fill=white, draw=black] ({\currentx},{0}) rectangle ++(\squaresize,\squaresize);
        }
        \pgfmathsetmacro{\currenty}{-1 * \squaresize}
        \filldraw[fill=white, draw=black] ({0},{\currenty}) rectangle ++(\squaresize,\squaresize);
        \pgfmathsetmacro{\currenty}{-2 * \squaresize}
        \filldraw[fill=white, draw=black] ({0},{\currenty}) rectangle ++(\squaresize,\squaresize);
    \end{tikzpicture}
\end{minipage}
\begin{minipage}[c]{0.05\textwidth}
    $\xlongrightarrow{\alpha}$
\end{minipage}
\begin{minipage}[c]{0.12\textwidth}
   \begin{tikzpicture}[scale=0.35] 
        \def\squaresize{0.8}

        \foreach \x in {0,...,4} {
            \pgfmathsetmacro{\currentx}{\x * \squaresize}
            \filldraw[fill=white, draw=black] ({\currentx},{0}) rectangle ++(\squaresize,\squaresize);
        }
        \foreach \x in {0,...,1} {
            \pgfmathsetmacro{\currentx}{\x * \squaresize}
            \pgfmathsetmacro{\currenty}{-1 * \squaresize}
            \filldraw[fill=white, draw=black] ({\currentx},{\currenty}) rectangle ++(\squaresize,\squaresize);
        }
        \pgfmathsetmacro{\currenty}{-2 * \squaresize}
        \filldraw[fill=white, draw=black] ({0},{\currenty}) rectangle ++(\squaresize,\squaresize);
    \end{tikzpicture}
\end{minipage}

\vskip 5mm
As the last example, we consider word $\w=\b\a\b\a\b\a \in \Wset$. Its corresponds to partition $P(\w)=(5,3,1)$:

\vskip 2mm
\begin{minipage}[c]{0.02\textwidth}
 $\emptyset$
\end{minipage}
\begin{minipage}[c]{0.05\textwidth}
    $\xlongrightarrow{\beta}$
\end{minipage}
\begin{minipage}[c]{0.05\textwidth}
  \begin{tikzpicture}[scale=0.35] 
        \def\squaresize{0.8}

        \filldraw[fill=white, draw=black] ({0},{0}) rectangle ++(\squaresize,\squaresize);
    \end{tikzpicture}
\end{minipage}
\begin{minipage}[c]{0.05\textwidth}
    $\xlongrightarrow{\alpha}$
\end{minipage}
\begin{minipage}[c]{0.06\textwidth}
       \begin{tikzpicture}[scale=0.35] 
        \def\squaresize{0.8}

        \foreach \x in {0,...,1} {
            \pgfmathsetmacro{\currentx}{\x * \squaresize}
            \filldraw[fill=white, draw=black] ({\currentx},{0}) rectangle ++(\squaresize,\squaresize);
        }
    \end{tikzpicture}
\end{minipage}
\begin{minipage}[c]{0.05\textwidth}
    $\xlongrightarrow{\beta}$
\end{minipage}
\begin{minipage}[c]{0.08\textwidth}
       \begin{tikzpicture}[scale=0.35] 
        \def\squaresize{0.8}

        \foreach \x in {0,...,2} {
            \pgfmathsetmacro{\currentx}{\x * \squaresize}
            \filldraw[fill=white, draw=black] ({\currentx},{0}) rectangle ++(\squaresize,\squaresize);
        }
        \pgfmathsetmacro{\currenty}{-\squaresize}
        \filldraw[fill=white, draw=black] ({0},{\currenty}) rectangle ++(\squaresize,\squaresize);
    \end{tikzpicture}
\end{minipage}
\begin{minipage}[c]{0.05\textwidth}
    $\xlongrightarrow{\alpha}$
\end{minipage}
\begin{minipage}[c]{0.1\textwidth}
   \begin{tikzpicture}[scale=0.35] 
        \def\squaresize{0.8}

        \foreach \x in {0,...,3} {
            \pgfmathsetmacro{\currentx}{\x * \squaresize}
            \filldraw[fill=white, draw=black] ({\currentx},{0}) rectangle ++(\squaresize,\squaresize);
        }
  \foreach \x in {0,...,1} {
            \pgfmathsetmacro{\currentx}{\x * \squaresize}
            \pgfmathsetmacro{\currenty}{-1 * \squaresize}
            \filldraw[fill=white, draw=black] ({\currentx},{\currenty}) rectangle ++(\squaresize,\squaresize);
        }
    \end{tikzpicture}
\end{minipage}
\begin{minipage}[c]{0.05\textwidth}
    $\xlongrightarrow{\beta}$
\end{minipage}
\begin{minipage}[c]{0.12\textwidth}
   \begin{tikzpicture}[scale=0.35] 
        \def\squaresize{0.8}

        \foreach \x in {0,...,4} {
            \pgfmathsetmacro{\currentx}{\x * \squaresize}
            \filldraw[fill=white, draw=black] ({\currentx},{0}) rectangle ++(\squaresize,\squaresize);
        }
        \foreach \x in {0,...,2} {
            \pgfmathsetmacro{\currentx}{\x * \squaresize}
            \pgfmathsetmacro{\currenty}{-1 * \squaresize}
            \filldraw[fill=white, draw=black] ({\currentx},{\currenty}) rectangle ++(\squaresize,\squaresize);
        }
        \pgfmathsetmacro{\currenty}{-2 * \squaresize}
        \filldraw[fill=white, draw=black] ({0},{\currenty}) rectangle ++(\squaresize,\squaresize);
    \end{tikzpicture}
\end{minipage}
\end{exmp}

We now describe how size, length and 2-measure are affected by $\a$, $\b$ and $\pt$.  

\begin{lem}\cite[Lem. 11]{IKM}
\label{lem:stats}
For any $P \in \Pset$ we have: 
\begin{enumerate}
\item	$\displaystyle
	\len{\a (P)} =
	\len{P},$ $\displaystyle
	\len{\b (P)} =
	\len{P} + 1 ,$ and \newline $\displaystyle
	\len{\pt P} =
	\begin{cases} 
	\len{P} - 1 &\text{if $P \in \BP$,} \\
	\len{P} &\text{if $\pt P \in \AP$,}\\
	\end{cases}$
\item	$\displaystyle |\a(P)| = |P| + \two{P},$ $\displaystyle |\b( P)| =
\begin{cases}
    |P| + \two{P} + 1 &\text{if $ P\in\AP $,} \\
	|P| + \two{P} &\text{if $P\in\BP$,}
	\end{cases},$  and \newline $\displaystyle |\pt P| = |P| - \two{P}$,

\item 	$\displaystyle
	\two{\a( P)} =
	\two{P} ,$ $\displaystyle
	\two{\b( P)} =
	\begin{cases}
	\two{P} + 1 &\text{if $P \in \AP$,} \\
	\two{P} &\text{if $ P \in \BP$,}
	\end{cases}$ and \newline $\displaystyle
	\two{\pt P} =
	\begin{cases}
	\two{P} - 1 &\text{if $P \in \BP$ and $\pt P \in \AP$,} \\
	\two{P} &\text{otherwise.}
	\end{cases}$
\end{enumerate}
\end{lem}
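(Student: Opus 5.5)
The plan is to prove the three items separately. I would work throughout with the frequency vector $f=f(P)$ and its spreads, using the formula
$$\two{f}=\sum_{[i,j]}\left\lceil \tfrac{j-i+1}{2}\right\rceil,$$
where the sum runs over the spreads $[i,j]$ of $f$. This formula is the same as $|\Il(f)|=|\Ir(f)|$ noted above. The statements about $\pt$ then come for free. By Lemma \ref{Lem:abpt} we have $P=\a(\pt P)$ if $P\in\AP$ and $P=\b(\pt P)$ if $P\in\BP$. So each $\pt$-formula is obtained by applying the corresponding $\a$/$\b$-formula to $\pt P$ and solving for the statistic of $\pt P$; the case split is decided by whether $\pt P$ lies in $\AP$ or $\BP$. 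The first line of (1) should be read with the case condition $P\in\AP$ (i.e. $P=\a(\pt P)$).

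\textbf{Length and size.} $\a$ only replaces parts by parts, so $\len{\a(P)}=\len{P}$. $\b$ adds one part of size $1$ and then applies $\a$ to $P'$, the partition obtained by deleting the $1$'s, so $\len{\b(P)}=\len{P}+1$. Since $\a$ raises exactly one part for each $i\in\Il(P)$, we get $|\a(P)|=|P|+\two{P}$, and $|\b(P)|=|P|+1+\two{P'}$.

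The key auxiliary computation is $\two{P'}$ versus $\two{P}$. Its only effect is on the spread $[1,j]$ containing $1$, if there is one. If $f_1=0$, then $P'=P$, $P\in\AP$, and $\two{P'}=\two{P}$. If $1\in[1,j]$ with $j$ even (so the spread has even length and $P\in\AP$), then the spread $[2,j]$ has $\lceil (j-1)/2\rceil=j/2$ elements in $\Il$, so $\two{P'}=\two{P}$. If $j$ is odd (so $P\in\BP$), then $[2,j]$ contributes $(j-1)/2=\lceil j/2\rceil-1$, so $\two{P'}=\two{P}-1$. This gives (2) for $\b$, and (2) for $\pt$ follows uniformly as $|\pt P|=|P|-\two{\pt P}$ once (3) is known in the matching form. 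More precisely, I would check that the case analysis yields $|P|-\two{P}$ in both cases.

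\textbf{The 2-measure under $\a$.} This is the step I expect to be the main obstacle. I would use the two descriptions of $\two{\cdot}$: the longest RR subpartition, and the minimal number of AR blocks covering the partition.

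For the lower bound, the set $\Il(P)+1$ consists of parts of $\a(P)$ that differ pairwise by at least $2$, since elements of $\Il$ are pairwise at least $2$ apart. Hence $\two{\a(P)}\ge\two{P}$.

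For the upper bound, cover each old spread $[i,j]$ by the pairs $\{i,i+1\},\{i+2,i+3\},\dots$, where the last pair is $\{j,j+1\}$ when $j-i$ is even. These are $\lceil (j-i+1)/2\rceil$ AR windows of the form $\{k,k+1\}$ with $k\in\Il(P)$. Every part of $\a(P)$ lies in one of these windows: unmoved parts lie in the old support, and moved parts go from $k$ to $k+1$. So $\a(P)$ admits an AR refinement into $\two{P}$ blocks, and hence $\two{\a(P)}\le\two{P}$.

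\textbf{The 2-measure under $\b$.} Here I would apply this to $P'$, whose support lies in $[2,\infty)$, and then add the $1$'s back. The support of $\a(P')$ contains $2$ exactly when $2\in\supp(P)$. Adding $1$ creates a new window $\{1\}$ or merges with the window $\{2,3\}$.

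Checking the cases $f_1=0$, $j$ even, and $j$ odd against the value of $\two{P'}$ computed above should give $\two{\b(P)}=\two{P}+1$ for $P\in\AP$ and $\two{P}$ for $P\in\BP$. In doing so I would again use the window cover for the upper bound and an explicit RR subpartition containing $1$ for the lower bound.

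Finally, the $\pt$-formulas in (3) follow by substituting $\pt P$ into these identities, splitting according to whether $P\in\AP$ or $P\in\BP$ and whether $\pt P\in\AP$ or $\pt P\in\BP$.
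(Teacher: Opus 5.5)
Your proof is correct. The survey contains no proof of this lemma; it only cites \cite[Lem.~11]{IKM}. So there is no argument in the text to compare yours against, and what you propose is a valid self-contained proof.

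The core of it is the two-sided bound for $\two{\a(P)}$. From below you use the RR set $\Il(P)+1\subseteq\supp(\a(P))$; from above, the pairwise disjoint windows $\{k,k+1\}$, $k\in\Il(P)$. This is the right idea. For the upper bound you do not even need the AR-cover description of $\mu_2$, since an RR set meets each window at most once.

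A slightly sharper form of the same observation is worth recording. Each spread of $\a(P)$ is a run of consecutive windows $\{k,k+1\}\cap\supp(\a(P))$, in which only the first window can lose its left end $k$ (when $f_k=1$). Hence $\Ir(\a(P))=\Il(P)+1$, which gives both $\two{\a(P)}=\two{P}$ and $\a(P)\in\AP$ at once.

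A few remarks in your sketch are inaccurate, though none is load-bearing.

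First, $2\in\supp(\a(P'))$ holds only when $f_2(P)\ge 2$, not whenever $2\in\supp(P)$: for $P=(2)$ one gets $\a(P')=(3)$. So the ``merging with $\{2,3\}$'' picture needs care. It is simpler to skip it.
\begin{itemize}
\item Lower bound: $\{1\}\cup(\Il(P')+1)$ is an RR subset of $\supp(\b(P))$, because $\Il(P')+1\subseteq[3,\infty)$.
\item Upper bound: $\{1\}$ together with the windows $\{k,k+1\}$, $k\in\Il(P')$, covers $\supp(\b(P))$.
\end{itemize}
Hence $\two{\b(P)}=1+\two{P'}$ in all cases, and your computation of $\two{P'}$ finishes (3) for $\b$.

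Second, the identity $|\pt P|=|P|-\two{\pt P}$ is false when $P\in\BP$ and $\pt P\in\AP$ (take $P=(1)$). The correct uniform statement is $|\pt P|=|P|-\two{P}$. It follows by checking three cases: $P\in\AP$; $P,\pt P\in\BP$; and $P\in\BP$ with $\pt P\in\AP$.

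Third, the case split for $\len{\pt P}$ is governed by whether $P$, not $\pt P$, lies in $\AP$ or $\BP$. You are right that the second printed case must read $P\in\AP$: for $P=(1)$ both printed conditions hold and give different values.
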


\subsection{The Descent Map}
\label{sec:universal}


\begin{defn}
 For $\w=\w_1\w_2\cdots\w_n \in \Wset$ its  {\emph{descent set}}  $\Des{\w}$ is the set of all indices $i$ for which $\w_i\w_{i+1}=\b\a$.
\end{defn}

 So, $\Des{\w}$ records the positions of descents in $\w$ relative to the ordering  $\b > \a$.  
These indices differ pairwise by at least $2$.

 \begin{exmp}\label{Exmp:Descent}
 Consider $$\w =\b{\color{red}\mathbf{|}}\a\b{\color{red}|}\a\a\a\b\b\b{\color{red}|}\a.$$
 Vertical lines ${\color{red}|}$ denote positions of descents, that is, places where $\b$ preceeds $\a$. 
The descent set is $$\Des{\w}=\{1,3,9\}.$$   
 \end{exmp}

\begin{defn}
The {\emph{descent number}} and {\emph{major index}} of $\w$ are 
$$\des{\w}=|\Des{\w}| \text{ and }\maj{\w}:=\sum_{i \in \Des{\w}} i.$$
\end{defn}

\begin{prop}\cite[Prop. 12]{IKM}
\label{prop:stats}
Let  $P \in \Pset$ and let $\w = \burge{P}$.  Then: 
\begin{enumerate}
\item	$\len{P} = $ \#\,occurrences of $\b$ in $\w$,
\item	$|P|=\maj{\w}$,
\item	$\two{P} = \des{\w} =$ \#\,occurrences of $\b\a$ in $\w$.
\end{enumerate}
\end{prop}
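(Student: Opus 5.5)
We argue by strong induction on the length of the Burge chain, that is, on the length $k+1$ of $\w=\burge{P}$. The base case is $P=\emptyf$ with $\w=\a$: it has no $\b$, no descents and major index $0$, matching $\len{\emptyf}=|\emptyf|=\two{\emptyf}=0$.

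For the inductive step write $\w=\w_1\w'$, where $\w'=\burge{\pt P}$; this holds by the definition of the Burge chain, since the chain of $\pt P$ is the tail of the chain of $P$. By induction, $\pt P$ has length equal to the number of $\b$'s in $\w'$, size $\maj{\w'}$ and 2-measure $\des{\w'}$. We then transfer each statistic from $\pt P$ to $P=\w_1(\pt P)$ using Lemma \ref{lem:stats}.

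\textbf{Length.} Lemma \ref{lem:stats}(1) gives $\len{\a(R)}=\len{R}$ and $\len{\b(R)}=\len{R}+1$. So the length increases by one exactly when $\w_1=\b$, which gives (1).

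\textbf{2-measure.} By Lemma \ref{lem:stats}(3), applying $\b$ to $R=\pt P$ raises $\mu_2$ by one exactly when $R\in\AP$, and $\a$ never changes it. Now $R\in\AP$ is equivalent to the first letter $\w'_1=\burge{R}_1$ being $\a$. Hence $\two{P}$ increases by one exactly when $\w_1\w_2=\b\a$, that is, when $1\in\Des{\w}$. On the other hand $\Des{\w}=\{1\ \text{if}\ \w_1\w_2=\b\a\}\cup(\Des{\w'}+1)$, so $\des{\w}$ changes in exactly the same way, proving (3).

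\textbf{Size.} This is the main point. By Lemma \ref{lem:stats}(2),
\[
|P|=|\pt P|+\two{\pt P}+[\w_1\w_2=\b\a],
\]
because $|\a(R)|=|R|+\two{R}$, while $|\b(R)|$ gets an extra $+1$ precisely when $R\in\AP$. By induction together with (3) applied to $\pt P$, this becomes $\maj{\w'}+\des{\w'}+[1\in\Des{\w}]$. Since shifting all descents of $\w'$ by one adds exactly $\des{\w'}$ to the major index, we get
\[
\maj{\w}=\sum_{i\in\Des{\w'}}(i+1)+[1\in\Des{\w}]=\maj{\w'}+\des{\w'}+[1\in\Des{\w}],
\]
which proves (2).

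The only delicate points are bookkeeping. First, $R\in\AP$ must correspond to the first letter of $\burge{R}$ being $\a$, which is immediate from the definition of $\w_i$. Second, the case $\w'=\a$ (so $R=\emptyf\in\AP$) must be consistent; it is, since $\b(\emptyf)=(1)$ has size $0+0+1=1$, matching the descent at position $1$ in $\b\a$.
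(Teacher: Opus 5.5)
Your proof is correct and takes essentially the intended route. The survey itself only cites \cite[Prop.~12]{IKM}, with Lemma~\ref{lem:stats} placed just before it for exactly this use, and your argument follows that: induct along the Burge chain, write $\w=\w_1\burge{\pt P}$, transfer length, 2-measure and size from $\pt P$ to $P=\w_1(\pt P)$ via Lemma~\ref{lem:stats}, and handle the convention $\burge{\emptyf}=\a$ at the end of the chain. One small simplification: the size step is just the identity $\maj{\w}=\maj{\w'}+\des{\w}$, which you can read off more directly from $|\pt P|=|P|-\two{P}$ rather than going through $\two{\pt P}$ and the extra $+1$ for $\b$ on $\AP$.
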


\begin{exmp}
For $\w =\b\a\b\a\a\a\b\b\b\a$ we have
 $P=P(\w)=(5,3,2,2,1)$.
Word 
 $\w$ contains $5$ copies of $\b$, hence  
 $\len{P}=5$. From its descent set in Example \ref{Exmp:Descent} we conclude
 $$|P|=13=\maj{\w}, \ \two{P}=3=\des{\w}.$$ 
\end{exmp}

\begin{exmp}
The partition $P=(7,6,4,4,2,2,1,1,1)$ in Example~\ref{exmp:burge} has Burge code $\w =\a\a\b\b\a\a\b\b\b\a\a\b\b\b\b\a $. Note that $\w$ contains $9=\len{P}$ copies of $\b$. Moreover, we have $\Des{\w}=\{4,9,15\}$,  so $\maj{\w}=28=|P|$ and  $\des{\w}=3=\two{P}$. 
\end{exmp}

\begin{defn}
For a partition $P$, let $P-1$ be the {\emph{reduced partition}} obtained by subtracting 1 from each part of $P$ and eliminating any resulting zeros. 
\end{defn}
\vskip 5mm
\begin{exmp}
For example: If $P=(6,4^3,2^4,1^3)$ then $P-1=(5,3^3,1^4)$.     
\end{exmp}

The following is the crucial characterization of the descent map that is used in the proof of the Box Theorem.
\vskip 5mm
\begin{thm}\cite[Lem. 15]{IKM}
The \emph{descent map}
 $P \mapsto \Des{P}$ is a size-preserving function from $\Pset$ to $\RRset$ satisfying $\Des{\pt P}=\Des{P}-1$.  
 It is the unique such function.
\end{thm}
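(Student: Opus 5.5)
The plan is to read the descent map as $\Des{P}:=\Des{\burge{P}}$, viewed as a partition whose parts are the descent positions listed in decreasing order. Existence then comes from Proposition~\ref{prop:stats} together with the way $\burge{\cdot}$ interacts with $\pt$. Uniqueness will be proved by induction on $|P|$, using the fact that $|\pt P|<|P|$ for $P\neq\emptyf$.

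\emph{Existence.} The positions $i$ with $\w_i\w_{i+1}=\b\a$ pairwise differ by at least $2$, since a letter cannot be both the $\a$ of one descent and the $\b$ of the next. They are also positive integers. Hence $\Des{P}\in\RRset$. By Proposition~\ref{prop:stats}(2), $|\Des{P}|=\maj{\burge{P}}=|P|$, so the map is size preserving.

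For the shift property, let $\burge{P}=\w_1\w_2\cdots\w_{k+1}$ with Burge chain $\pti{P}{0},\dots,\pti{P}{k}$. The Burge chain of $\pt P$ is $\pti{P}{1},\dots,\pti{P}{k}$, so $\burge{\pt P}=\w_2\cdots\w_{k+1}$. Deleting the first letter moves every descent at position $i\ge 2$ to position $i-1$ and destroys a descent at position $1$, if there is one. On partitions this is exactly subtracting $1$ from each part and discarding the resulting zero, so $\Des{\pt P}=\Des{P}-1$. The case $P=\emptyf$ is trivial, since $\pt\emptyf=\emptyf$ and $\Des{\emptyf}=\emptyf$.

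\emph{Uniqueness.} Suppose $\varphi:\Pset\to\RRset$ is size preserving and satisfies $\varphi(\pt P)=\varphi(P)-1$. We show $\varphi=\Des{\cdot}$ by induction on $|P|$. For $|P|=0$, size preservation forces $\varphi(\emptyf)=\emptyf$. Now let $P\ne\emptyf$. By the induction hypothesis, $R:=\varphi(\pt P)=\Des{\pt P}$. Both $Q:=\varphi(P)$ and $Q':=\Des{P}$ are RR partitions of $|P|$ with $Q-1=Q'-1=R$.

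The key observation, and the only step needing care, is that an RR partition $Q$ is recovered from $Q-1$ and $|Q|$. The parts of $Q$ greater than $1$ are exactly the parts of $R$ increased by $1$. Because $Q$ is RR, it has at most one part equal to $1$. The number of such parts is therefore $|Q|-|R|-\len{R}\in\{0,1\}$, which is determined. Hence $Q=Q'$, completing the induction.

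The main point to verify carefully is the identity $\burge{\pt P}=\w_2\cdots\w_{k+1}$, including the degenerate chain ends, for example $P=(1)$ with code $\b\a$. This follows directly from the definition of the Burge chain, since the chain of $\pt P$ is the tail of the chain of $P$.
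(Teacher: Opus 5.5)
Your proof is correct and takes essentially the same route as the argument behind the cited \cite[Lem. 15]{IKM}; the survey itself only states the result. Existence comes from Proposition~\ref{prop:stats} together with the fact that $\burge{\pt P}$ is $\burge{P}$ with its first letter deleted, and uniqueness comes from induction on $|P|$, using that $Q$ is recovered from $Q-1$ and $|Q|$.

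One minor remark: the recovery step does not need the RR property. For any partition $Q$ the number of parts equal to $1$ is $|Q|-|Q-1|-\len{Q-1}$, so uniqueness holds even among size-preserving maps $\Pset\to\Pset$ with this shift property.
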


 It is  possible to characterize partitions in $\RRset$ with a number of equivalent statements \cite[Prop. 23]{IKM}. One of them is by checking the Burge code $\Omega(P)$. We have $P\in\RRset$ if and only if $\burge{P}$ does not contain a substring $\b\b$. 
 
 In the statement of the Box Theorem we will use the common notation $\a^k$ instead of  string with $k$ symbols equal to $\a$. The same applies for $\b$, too.

\subsection{The Theorem}\label{Box_Theorem}

The proof of the Box Theorem is now completed by showing that the descent map and $\D$ coincide. The map $\D$ is obviously size preserving. The fact that $\D(\pt P)=\D(P)-1$ now hinges on the description of all invariant subspaces of a nilpotent matrix given by Shayman \cite{Sha-1,Sha-2}.

First, we use an easy consequence of Shayman's results.

\begin{prop}\cite[Prop. 19]{IKM}\label{W ia an image}
Each invariant subspace of $B$ is equal to the image of a matrix $A$ that commutes with $B$. 
\end{prop}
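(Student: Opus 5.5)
We need to show: every $B$-invariant subspace $W\subseteq F^n$ is $\im A$ for some $A$ with $AB=BA$. The approach is to give $W$ a Jordan basis adapted to $B$ and build $A$ as a module homomorphism onto $W$.

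First, view $V=F^n$ as a module over $F[x]$, with $x$ acting as $B$. Then $W$ is a submodule, and $B|_W$ is nilpotent. By the Jordan form of $B|_W$, pick a Jordan basis of $W$: vectors $w_1,\dots,w_r\in W$ with nilpotency orders $s_1\ge\dots\ge s_r$, meaning $B^{s_j}w_j=0$ and $B^{s_j-1}w_j\neq0$. The vectors $B^{t}w_j$ ($0\le t<s_j$) form a basis of $W$. So $W=\bigoplus_j F[B]w_j$ is a direct sum of cyclic submodules.

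The goal is an $F[x]$-module homomorphism $A:V\to V$ with image $W$; such an $A$ commutes with $B$ automatically. Write $V=\bigoplus_{i=1}^k F[B]e_i$ with $B^{p_i}e_i=0$, where $P=(p_1,\dots,p_k)$. A homomorphism is determined by choosing images $A e_i=u_i$, subject only to $B^{p_i}u_i=0$. To get image exactly $W$, I want to send $r$ distinct generators $e_{i_j}$ to $w_j$ and the remaining generators to $0$. This requires an injection $j\mapsto i_j$ with $p_{i_j}\ge s_j$.

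The key step, and the main obstacle, is the combinatorial comparison: the Jordan type $S=(s_1,\dots,s_r)$ of $B|_W$ must satisfy $r\le k$ and $s_j\le p_j$ for all $j$, i.e. the Young diagram of $S$ fits inside that of $P$. I would prove this with kernel dimensions. Since $\ker B^t\cap W\subseteq \ker B^t$, we have $\dim\ker (B|_W)^t\le \dim\ker B^t$ for every $t$. Moreover $\dim\ker B^t-\dim\ker B^{t-1}$ counts the parts of $P$ that are $\ge t$, which is the $t$-th part of the conjugate $P^T$. These kernel-dimension inequalities alone give only partial-sum comparisons, not containment. So I would use the finer fact that $B$ induces an injection
$$\frac{W\cap\ker B^{t}}{W\cap\ker B^{t-1}}\hookrightarrow \frac{\ker B^{t}}{\ker B^{t-1}},$$
since $W\cap\ker B^{t-1}=(W\cap\ker B^t)\cap\ker B^{t-1}$. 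This gives $s'_t\le p'_t$ for every $t$, i.e. $S^T\subseteq P^T$ part by part, which is equivalent to $S\subseteq P$.

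Given $S\subseteq P$, take $i_j=j$. Set $A e_j=w_j$ for $j\le r$ and $A e_i=0$ for $i>r$, extended $F[B]$-linearly; this is well defined because $B^{p_j}w_j=0$ when $s_j\le p_j$. Then $AB=BA$, and $\im A=\sum_j F[B]w_j=W$. This proves the proposition without invoking Shayman's full classification, although it is consistent with it.
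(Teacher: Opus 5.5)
Your proof is correct, and it takes a more elementary route than the paper. The survey gives no argument of its own. Following the cited source, it treats the proposition as an easy consequence of Shayman's explicit description of all invariant subspaces of a nilpotent matrix, and reads off a commuting $A$ with $\im A=W$ from that description.

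You bypass the classification entirely. The only input you need is that the Jordan type $S$ of $B|_W$ fits inside $P$, which you prove from the filtration $W\cap\ker B^t$. Then the freeness of homomorphisms out of $\bigoplus_i F[x]/(x^{p_i})$ produces $A$: any images $u_i$ with $B^{p_i}u_i=0$ are allowed.

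There is one wording slip. The injection $(W\cap\ker B^{t})/(W\cap\ker B^{t-1})\hookrightarrow \ker B^{t}/\ker B^{t-1}$ is induced by the inclusion $W\cap\ker B^t\subseteq\ker B^t$, not by $B$; a map induced by $B$ would land in $\ker B^{t-1}/\ker B^{t-2}$. The justification you wrote is exactly the one for the inclusion map, so the argument stands.

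Your route is self-contained and valid over an arbitrary field. The Shayman route costs an external classification, but it gives an explicit parametrization of all invariant subspaces. The paper relies on that parametrization anyway for the subsequent result identifying the Jordan type of $B|_W$, for $W$ the image of a generic commuting $A$, with $\partial P$.
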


This allows us to describe the Jordan type of a restriction of $B$ to its generic invariant subspace.

\begin{thm}\cite[Thm. 23]{IKM}\label{partial(P) = P(B_W)}
    Suppose that $B$ is a nilpotent matrix with its Jordan type $P=P(B)$, that $A$ is a generic nilpotent matrix commuting with $B$ and that $W=\im A$ is its image. Then the Jordan type of the restriction $B|_W$ is given by $\df P$.
\end{thm}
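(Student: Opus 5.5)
The plan is to identify the Jordan type of $B|_W$ through ranks. For $W=\im A$ and every $k\ge 0$ we have
$$\dim B^k W=\operatorname{rank}(B^kA).$$
The conjugate of the Jordan type of $B|_W$ is therefore determined by the sequence $r_k(A)=\operatorname{rank}(B^kA)$, $k=0,1,2,\ldots$. Each function $A\mapsto r_k(A)$ is lower semicontinuous on the irreducible variety $\Nil_B$. Hence on a dense open subset all the $r_k$ simultaneously attain their maximal values $r_k^{\max}$ over $\Nil_B$. So the statement reduces to a purely combinatorial computation: show that $r_k^{\max}=\sum_{j>k}(\text{number of parts of }\df P\text{ that are }\ge j-k)$, i.e. that the maximal ranks are exactly those predicted by $\df P$.

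First I fix coordinates. Decompose $V=\bigoplus_j V_j$, where $V_j$ is the sum of the $f_j$ Jordan blocks of size $j$. In this basis the centralizer of $B$ consists of block matrices whose $(i,j)$ pieces are upper-triangular Toeplitz maps between $J_i$ and $J_j$. The nilpotency condition only constrains the leading coefficients on equal-size blocks: the $f_j\times f_j$ matrix of these leading coefficients must be nilpotent. A generic such matrix has rank $f_j-1$.

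As a sanity check on sizes, $r_0^{\max}=n-\dim\ker A$ for a generic $A$. Here $\dim\ker A=\len{\D(P)}=\two{P}$, since the dense orbit has $\two{P}$ parts. This matches $|\df P|=|P|-\two{P}$ from Lemma \ref{lem:stats}.

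For the lower bound I construct an explicit element $A_0\in\Nil_B$, of rook type as in the remark after Theorem \ref{thm:refinements}. Along each spread $[i,j]$ of $P$, $A_0$ chains the blocks together: it maps blocks of size $m$ to blocks of size $m$ (in a nilpotent cycle-free chain) and to blocks of size $m\pm1$ by the canonical inclusion or projection shifts. Blocks of sizes in $\Il(P)$ or $\Ir(P)$ (depending on whether $P\in\AP$ or $\BP$) lose one dimension of image. I then compute $B^kA_0$ directly; its image is spanned by truncated block chains, and its Jordan type should come out as $\a^{-1}(P)$ or $\b^{-1}(P)$ respectively. This yields $r_k^{\max}\ge r_k(A_0)$, the value predicted by $\df P$.

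The main obstacle is the matching upper bound, i.e. that no $A\in\Nil_B$ has $\operatorname{rank}(B^kA)$ larger than $r_k(A_0)$. I would filter $V$ by $\ker B^m\cap\im B^l$, which induces a grading on the centralizer. Then I bound the rank of $B^kA$ by the ranks of its associated graded pieces, which are the leading-coefficient matrices between blocks of sizes $m$ and $m'$. On each spread this becomes a bound on the rank of a nilpotent block-tridiagonal-type matrix indexed by the sizes in the spread. The defect in rank equals the number of odd-positioned sizes in the spread, which is exactly how $\Il$, $\Ir$ and the $\AP/\BP$ dichotomy enter. If this direct bound proves unwieldy, I would instead use Proposition \ref{W ia an image}: every $B$-invariant subspace is an image $\im A$. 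Combined with Shayman's description of the possible Jordan types of restrictions to invariant subspaces of a given dimension, this would show that $\df P$ dominates all other possible types of dimension $|P|-\two{P}$. Generic position then forces equality with the dominant type.
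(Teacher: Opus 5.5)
Your reduction to simultaneous maximality of the ranks $\operatorname{rank}(B^kA)$ on the irreducible variety $\Nil_B$ is sound. The upper bound, which you correctly call the crux, is not established, and neither of your two routes can supply it.

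\emph{The filtration route.} The leading-order (associated graded) part of a filtered map is a degeneration of that map: conjugate by a one-parameter torus and let the parameter tend to $0$. So its rank is \emph{at most} the rank of the map. The leading-coefficient matrices therefore give lower bounds for $\operatorname{rank}(B^kA)$, not upper bounds. The claim that the ``defect equals the number of odd-positioned sizes'' is asserted rather than derived.

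\emph{The fallback route.} It is false as stated. That every $B$-invariant subspace is $\im A$ for some $A$ commuting with $B$ does not give a \emph{nilpotent} $A$. Among all invariant subspaces of dimension $|P|-\two{P}$, the type $\pt P$ is not dominant. Take $P=(3,2)$ with Jordan chains $e_3\mapsto e_2\mapsto e_1\mapsto 0$ and $g_2\mapsto g_1\mapsto 0$. Then $\two{P}=1$ and $\pt P=(2,2)$. But $W=\langle e_1,e_2,e_3,g_1\rangle$ is invariant of dimension $4$, and $B|_W$ has type $(3,1)\succ(2,2)$. No $A\in\Nil_B$ has image $W$. Nilpotency forces the constant term of the $J_3\to J_3$ component to vanish, and every map $J_2\to J_3$ lands in $\langle e_1,e_2\rangle$, so $e_3\notin\im A$. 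Nilpotency of $A$ must enter the upper bound, and in your sketch it never does.

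Also, your displayed formula for $r_k^{\max}$ is mistyped: as written it equals $|\pt P|$ for every $k$. You want $\sum_{j>k}\#\{\text{parts of }\pt P\text{ that are }\ge j\}$.

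The missing idea is to run your $k=0$ ``sanity check'' at every level $k$.
\begin{itemize}
\item Since $B^kA=AB^k$, we have $\operatorname{rank}(B^kA)=\operatorname{rank}(A|_{\im B^k})$. Here $A|_{\im B^k}$ is nilpotent and commutes with $B|_{\im B^k}$, whose Jordan type is $P-k$ (the reduction $P\mapsto P-1$ applied $k$ times).
\item By Proposition~\ref{D(P)-maximal}, the Jordan type of $A|_{\im B^k}$ is dominated by $\D(P-k)$. Hence it has at least $\len{\D(P-k)}=\two{P-k}$ blocks, by the same count of Basili you already use. So $\operatorname{rank}(B^kA)\le|P-k|-\two{P-k}$ for every $A\in\Nil_B$.
\item The bound is attained, because every nilpotent $\psi$ on $\im B^k$ commuting with $B$ lifts to $\Nil_B$. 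For a Jordan generator $e$ of a block of size $p>k$, choose $x$ with $B^kx=\psi(B^ke)$; then $B^px=\psi(B^pe)=0$, so setting $Ae=x$ is legitimate. Set $Ae=0$ on blocks of size $\le k$. The leading-coefficient matrices of $A$ are those of $\psi$ or zero, so $A$ is nilpotent. Now lift a generic $\psi$.
\item $\pt P$ arises from $P$ by lowering one part of each size in $\Ir(P)$. Since $\Ir$ is read downward from the right ends of spreads, $\#\{s\in\Ir(P):s>k\}=\two{P-k}$. Hence $|P-k|-\two{P-k}=\dim B^kW'$ for any $W'$ on which $B$ has type $\pt P$.
\end{itemize}
This gives both bounds without an explicit $A_0$. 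Incidentally, the description of your $A_0$ should use $\Ir(P)$ in both the $\AP$ and the $\BP$ case.
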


This result implies the equality $\D(P)=\Des{P}$. The statement of the Box Theorem is then the following:

\begin{thm}\cite[Cor. 2]{IKM}\label{BoxThm}
Let $Q=(q_1,q_2,\ldots,q_k) \in \RRset$ and set $\delta_1 = q_k$ and $\delta_i = q_{k-i+1}-q_{k-i+2}-1$ for $2 \leq i \leq k$.  Then $\D^{-1}(Q)$ is of size $\delta_1\delta_2 \cdots\delta_k$ and consists of precisely those partitions whose Burge code is of the form
\begin{equation}
\label{eq:code}
\a^{\delta_1-i_1}\b^{i_1}\a^{\delta_2-i_2+1}\b^{i_2}
\a^{\delta_3-i_3+1} \b^{i_3} \cdots \a^{\delta_k-i_k+1} \b^{i_k}\a,
\end{equation}
where $(i_1,\ldots,i_k) \in [1,\delta_1] \times [1,\delta_2] \times \cdots \times [1,\delta_k]$.  Moreover, the partition determined by~\eqref{eq:code} has exactly $\sum_j i_j$ parts.
\end{thm}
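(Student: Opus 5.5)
Since $\D(P)=\Des{P}$ (from Theorem \ref{partial(P) = P(B_W)} combined with the uniqueness characterization of the descent map), the statement reduces to a purely combinatorial question about Burge codes. Because the Burge encoding is a bijection between $\Pset$ and $\Wset=(\a^*\b)^*\a$, I would describe $\D^{-1}(Q)$ as the set of words $\w\in\Wset$ whose descent set is exactly the set of parts of $Q$, with total size given by $\maj{\w}=|Q|$. The latter is automatic from Proposition \ref{prop:stats}(2), since $|P|=\maj{\w}$ and $\Des{\w}$ is the set of parts of $\Des{P}$. This last identification holds because $\Des{P}$, viewed as an RR partition, has as parts precisely the positions of descents in $\burge{P}$; that is how the descent map is defined on partitions. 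So I would first record that $\Des{P}=Q$ if and only if $\Des{\burge{P}}=\{q_k<q_{k-1}<\cdots<q_1\}$.

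Next I characterize words in $\Wset$ with a prescribed descent set $\{d_1<d_2<\cdots<d_k\}$, where $d_j=q_{k-j+1}$. Such a word ends in $\b\a$, so the last descent is at position $n-1$, giving $d_k=q_1=n-1$ and word length $q_1+1$. Between consecutive descents the word must be weakly increasing with respect to $\b>\a$, so each block is of the form $\a^{a}\b^{b}$. Concretely, the first block occupies positions $1,\dots,d_1$ and equals $\a^{d_1-i_1}\b^{i_1}$ with $i_1\ge 1$, since position $d_1$ must be $\b$. For $j\ge2$, the block occupies positions $d_{j-1}+1,\dots,d_j$; it must start with $\a$ (position $d_{j-1}+1$ follows a descent) and end with $\b$. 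It therefore has the form $\a^{m}\b^{i_j}$ with $m\ge1$, $i_j\ge1$ and $m+i_j=d_j-d_{j-1}$, and the word finishes with the final letter $\a$.

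Setting $\delta_1=q_k$ and $\delta_j=d_j-d_{j-1}-1=q_{k-j+1}-q_{k-j+2}-1$, the constraints become $1\le i_1\le\delta_1$ and $1\le i_j\le\delta_j$ with $m=\delta_j-i_j+1$. This is exactly the form \eqref{eq:code}. Conversely, every such word lies in $\Wset$ (it ends in a single $\a$ preceded by $\b$) and has descents precisely at the block ends, so the correspondence is a bijection with the box $[1,\delta_1]\times\cdots\times[1,\delta_k]$. The cardinality $\delta_1\cdots\delta_k$ follows, and the number of parts $\sum_j i_j$ is Proposition \ref{prop:stats}(1), which counts the $\b$'s.

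The main obstacle is not the word combinatorics but the input $\D=\Des{}$. That identity rests on showing that $\D$ satisfies $\D(\pt P)=\D(P)-1$, which comes from Theorem \ref{partial(P) = P(B_W)}. One argues that for a generic $A\in\Nil_B$ with $W=\im A$, the restriction $A|_W$ is generic in the nilpotent commutator of $B|_W$, whose type is $\pt P$ (Proposition \ref{W ia an image} is needed here to get surjectivity onto all invariant subspaces). Since $A|_{\im A}$ has Jordan type $P_A-1$, this gives $\D(\pt P)=\D(P)-1$. The uniqueness part of the descent-map theorem then forces $\D=\Des{}$, and I would take these cited results as given.
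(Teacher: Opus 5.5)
Your proposal follows the paper's route: identify $\D$ with the descent map (size preservation, plus $\D(\pt P)=\D(P)-1$ obtained from \cite[Thm.~23]{IKM}, plus the uniqueness of the descent map), and then read off $\D^{-1}(Q)$ as the Burge words whose descent set is $\{q_k,\ldots,q_1\}$; the survey leaves this word-level count implicit, and you carry it out explicitly and correctly. Restricting a generic $A$ to $W=\im A$ by itself only gives $\D(\pt P)\succeq\D(P)-1$, and the step that upgrades this to equality (your claim that $A|_W$ is generic in $\Nil_{B|_W}$) is only asserted, but the paper likewise defers that step to \cite{IKM}, so it is not a gap relative to the paper.
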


Observe that for $Q \in \RRset$ we have $\D(Q)=Q$. So $Q\in\D^{-1}(Q) $ and it corresponds to taking $(i_1,\ldots,i_k)=(1,\ldots,1)$ in Theorem~\ref{BoxThm}.

\begin{exmp}\label{exmp-3}
For $Q=(9,6,2) \in \RRset$ we have $(\delta_1,\delta_2,\delta_3)=(2,3,2)$, so $\D^{-1}(Q)$ contains $12$ partitions indexed by coordinates in the box $[1,2] \times [1,3] \times [1,2]$. These are displayed in Table~1. \end{exmp}

\begin{table}[H]
\centering
{
\begin{tabular}{l|l|l|l}\label{Figure1}
$(i_1,i_2,i_3)$ &  code $\w$ &  partition $\Omega^{-1}(\w)$ & \# parts\\
\hline
  $ (1, 1, 1) $ & $ \a\b\a\a\a\b\a\a\b\a $ & 
  $ ( 9,6,2) $ & $3 $ \\  
  $ (2, 1, 1) $ & $ \b\b\a\a\a\b\a\a\b\a $ &
  $ ( 9,6,1^2) $ & $4 $ \\  
  $ (1, 2, 1) $ & $ \a\b\a\a\b\b\a\a\b\a $ & 
  $ ( 9,4,2^2) $ & $4 $ \\  
  $ (2, 2, 1) $ & $ \b\b\a\a\b\b\a\a\b\a $ & 
  $ ( 9,3^2,1^2) $ & $5 $ \\  
  $ (1, 3, 1) $ & $ \a\b\a\b\b\b\a\a\b\a $ & 
  $ ( 9,4,{2},1^2) $ & $5 $ \\  
  $ (2, 3, 1) $ & $ \b\b\a\b\b\b\a\a\b\a $ & 
  $ ( 9,4,\pow{1}{4}) $ & $6 $ \\  
  $ (1, 1, 2) $ & $ \a\b\a\a\a\b\a\b\b\a $ &
  $ ( 8,4,{3},{2}) $ & $4 $ \\  
  $ (2, 1, 2) $ & $ \b\b\a\a\a\b\a\b\b\a $ & 
  $ ( 8,{4},{3},1^2) $ & $5 $ \\  
  $ (1, 2, 2) $ & $ \a\b\a\a\b\b\a\b\b\a $ & 
  $ ( 8,4,\pow{2}{2},1) $ & $5 $ \\  
  $ (2, 2, 2) $ & $ \b\b\a\a\b\b\a\b\b\a $ & 
  $ ( 8,3^2,\pow{1}{3}) $ & $6 $ \\    
  $ (1, 3, 2) $ & $ \a\b\a\b\b\b\a\b\b\a $ & 
  $ ( 8,4,{2},\pow{1}{3}) $ & $6 $ \\  
  $ (2, 3, 2) $ & $ \b\b\a\b\b\b\a\b\b\a $ & 
  $ ( 8,4,\pow{1}{5}) $ & $7 $ \\   
 \end{tabular}
 }
 \label{fig:boxexample}
 \caption{The elements of $\D^{-1}(9,6,2)$ and the corresponding Burge codes, indexed by their coordinates $(i_1,i_2,i_3) \in [1,2] \times [1,3] \times [1,2]$.}
 \end{table}

\begin{rem}
    Observe that each syllable $\a^{\delta_i-i_i+1}\b^{i_i} $ (or $\a^{\delta_i-i_1}\b^{i_1}$) in the Burge code increases the dimension of the Box by $1$. Here, for $\delta_i=1$ the dimension is 'virtual', since we just have one 'slice' of the Box in the $i$-th direction. The extremal situation occurs for the Burge code $(\b\a)^k$ for some $k\in\N$. This Burge code corresponds to the partition $Q^{(k)} =(2k-1,2k-3,\ldots,3,1)$ that has all $\delta_i=1$. The partitions $Q^{(k)}$, $k\in\N$, are the only partitions $Q$ such that $\D^{-1}(Q)$ is the singleton set $\{Q\}$. Observe that $\left|Q^{(k)}\right|=k^2$ and $\len{Q^{(k)}}=\two{Q^{(k)}}=k$.
\end{rem}

Basili \cite[Thm. 1.2]{Bas-I} proved that the  process described by Oblak \cite{Obl-1,Obl-3} for each partition $P$ gives $\D(P)$. The iterative process reduces $P$ by the longest chain of $\D(P)$ in each step. Meanwhile, the Burge process described above reduces $P$ to $\pt P$  by reducing each almost rectangular part of $P$ by $1$ block in a backward manner. For precise relation between the two processes confer \cite[\S{4}]{IKM}. Another proof of the Basili Theorem is given in \cite[Thm. 34]{IKM}.

\section{Some Open Questions}

Despite recent progress made in understanding commuting Jordan types, several fundamental questions remain open:

\begin{itemize}
    \item \textbf{Complete Characterization of Commuting Jordan Types:} The central open problem is to {determine all pairs of commuting Jordan types} $(P, Q)$. While specific cases and properties have been identified, a general, explicit characterization or an algorithm to test commutativity for arbitrary pairs of partitions is still elusive.

    \item \textbf{Asymptotic proportion of commuting pairs:} No results are known on the asymptotic behaviour of the proportion of commuting partitions with increasing $n$. For $n$ small, one can count all pairs of commuting partitions using the results of section \ref{Misc} and \ref{Dominant}. If we denote by $C(n)$ the number of all pairs of commuting partitions of $n$ and by $A(n)$ the number of all pairs of partitions of $n$, what can one say about the behaviour of the quotient $\frac{C(n)}{A(n)}$ as $n$ grows?

    \item \textbf{Algebraic structure behind the Burge correspondence and more generally, 
    behind the commuting Jordan types:} The Burge correspondence has proven effective in describing the structure of $\mathcal{D}^{-1}(Q)$ as a ``box''. In \cite{Kha-Kos} the authors give examples of two partitions in the same box $\D^{-1}(Q)$ that do not commute. For instance, partitions $P_1=(7,1^2)$ and $P_2=(3^2,1^3)$ do not commute but we have $\D(P_1)=\D(P_2)=(7,2)$. Moreover, another example in \cite{Kha-Kos} shows that even partition $P_{mx}$ does not have to commute with all other partitions $P$ with $\D(P)=\D(P_{mx})$.  So, partitions with equal descent set need not commute. Can one decide from the Burge correspondence which pairs of partitions in a ''box'' are not commuting? 
    Are there any results in (inverse) semigroup theory that explain this phenomenon? And more generally, are there other algebraic constructions that govern the commutativity of Jordan types? 

\item \textbf{Stratification of orbits for pairs of commuting nilpotent matrices:} The previous problem has its geometric counterpart. Now, the more specific open problem is a generalization of the Gerstenhaber-Hesselink Theorem \cite[Thm. 6.2.5]{CoMcG} to the nilpotent commutator of a nilpotent matrix. Specifically, the question is which orbits are in the closure of the intersection of a given nilpotent orbit with the nilpotent commutator. In this direction, a conjecture on the stratification for a nilpotent commutator of a nilpotent matrix whose Jordan type is an RR partition was formulated by Boij, Iarrobino, and Khatami \cite[Conj. 4.2]{BoIK2025}. They provide the stratification for the case of an RR partition with two parts \cite[Thm. 1.3 and Cor. 3.8]{BoIK2025}. A more general question is to provide the Jordan types stratification of the variety of pairs of nilpotent commuting matrices.

    \item \textbf{Commuting Jordan types for other simple Lie algebras:} The study of commuting nilpotent elements and their Jordan types is primarily developed for the general linear Lie algebra $\mathfrak{gl}_n$ (type A). An intriguing question is whether there exists an analogous ``Burge correspondence'' or a similar combinatorial framework for describing commuting nilpotent elements in other simple Lie algebras (e.g., types B, C, D, etc.). This would extend the rich interplay between combinatorics and representation theory to a broader class of algebraic structures. Note however, that Premet \cite{Pre} showed that the variety of pairs of nilpotent elements in other simple Lie algebras is no longer irreducible. Some results on universally commuting Jordan types for other simple Lie algebras are given in Goddard \cite[Ch. 4]{God}.
\end{itemize}

These open questions and recent results, e.g. in \cite{BDKOS,Kha-Kos}, highlight the complexity and depth of the problem of commuting Jordan types, suggesting that further research will likely involve a blend of algebraic geometry, representation theory, and combinatorial methods.

\vskip 2mm

\textbf{Acknowledgment.} The author is grateful to Leila Khatami and Ganna Kudryavtseva for useful comments. He also thanks the anonymous referees for their careful reading and constructive comments, which strengthened the presentation and clarified several points. 

\bibliographystyle{plain}

\end{document}